\documentclass[]{interact}
\usepackage{lmodern}
\usepackage{epstopdf}
\usepackage[caption=false]{subfig}

\usepackage[numbers,sort&compress]{natbib}
\bibpunct[, ]{[}{]}{,}{n}{,}{,}
\usepackage{algorithm}
\usepackage{algorithmic}
\theoremstyle{plain}
\newtheorem{theorem}{Theorem}[section]
\newtheorem{lemma}[theorem]{Lemma}

\theoremstyle{definition}
\newtheorem{definition}[theorem]{Definition}

\theoremstyle{remark}
\newtheorem{remark}{Remark}

\newtheorem{properties}[theorem]{Properties}

\newcommand{\mc}{\mathcal}

\begin{document}


\title{Quaternion Nonlinear Transform-Induced Nuclear Norm for Low-Rank Tensor Completion}

\author{
\name{
Biswarup Karmakar\thanks{CONTACT Biswarup Karmakar. Email: biswarupk@iisc.ac.in; Ratikanta Behera: ratikanta@iisc.ac.in} \and
Ratikanta Behera
}
\affil{
Department of Computational and Data Sciences, Indian Institute of Science, Bangalore, 560012, India
}
}
\maketitle

\begin{abstract}
Tensor completion has emerged as a powerful framework for recovering missing data in multidimensional signals by exploiting low-rank tensor structures. Among existing approaches, linear transform–based tensor nuclear norm (TNN) methods have achieved considerable success by enforcing low-rankness on transformed frontal slices. However, the low-rank structure revealed by linear transforms remains inherently limited. To better capture intrinsic correlations, nonlinear transform–based TNN (NTTNN) models have been proposed, significantly enhancing low-rank representation through composite transforms. Despite their effectiveness, existing NTTNN methods are restricted to real-valued tensors and fail to model quaternion-valued data, which are essential for preserving inter-channel dependencies in color images and videos. Extending nonlinear TNN models to the quaternion domain is challenging due to the non-commutativity of quaternion multiplication and the complexity of quaternion singular value decomposition. To address the limitations encountered in prior works, we propose a quaternion nonlinear transform–induced tensor nuclear norm (QNTTNN) via a real embedding of quaternions, enabling tractable nuclear norm definitions and efficient optimization. Building upon QNTTNN, we formulate a quaternion tensor completion model and develop a proximal alternating minimization algorithm with rigorous convergence guarantees. Extensive experiments on benchmark color video inpainting datasets validate the superior performance of the proposed method over existing approaches.
\end{abstract}

\begin{keywords}
Quaternion tensor; Nonlinear transform; Low-rank completion; Proximal alternating minimization.
\end{keywords}

\section{Introduction}
Tensors form a natural mathematical structure for modeling data with multiple dimensions. So far, tensor-based models have been widely used in many fields, including signal processing~\cite{tokcan2026TDforSP}, data mining~\cite{papalexakis2016tensors}, machine learning~\cite{panagakis2021tensor,sidiropoulos2017tensor}, biomedical imaging~\cite{becker2015brain}, and recommendation systems~\cite{frolov2017tensor}. A color image can be viewed as a third-order tensor, while a color video can be represented as a fourth-order tensor by including the time dimension. Color video inpainting aims to fill in missing or damaged regions in RGB video sequences while keeping the spatial structure, temporal smoothness, and color consistency. A common and effective approach is to reshape the video data as a third-order tensor and recover it using low-rank tensor completion methods, which have shown strong performance in reconstructing high-dimensional visual data \cite{ding2019lowranktensor,Qin2022LRHOTC}. Given an incomplete tensor $\mathcal{M}\in \mathbb{R}^{n_1\times n_2\times n_3}$
with observed index set $\Omega$, tensor completion aims to recover a low-rank
tensor $\mathcal{X}\in \mathbb{R}^{n_1 \times n_2 \times n_3}$ such that
\begin{equation}
\min_{\mathcal{X}\in \mathbb{R}^{n_1 \times n_2 \times n_3}} \ \mathrm{rank}(\mathcal{X})
\quad \text{s.t.} \quad
\mathcal{P}_{\Omega}(\mathcal{X}) = \mathcal{P}_{\Omega}(\mathcal{M}),
\end{equation}
where $\mathcal{P}_{\Omega}$ denotes the projection operator onto the observed
entries. This rank minimization problem is NP-hard, and various strategies~\cite{lee2010atomic,jin2015penaltydecomp} have
been proposed to address it. A widely used approach replaces the rank function
with convex surrogates such as the tensor nuclear norm (TNN), typically defined
via the tensor singular value decomposition (t-SVD)~\cite{kilmer13,kernfeld2015,zhang2017exact}. These methods have also been extended to symmetric nonnegative tensors~\cite{duan2019iterative} via nuclear norm minimization under partial observations. These models have been extensively studied for grayscale video, hyperspectral imaging, and multidimensional signal recovery.

However, for color image and video data, real-valued tensor models typically
process RGB channels independently or as weakly coupled modes, which limits
their ability to capture intrinsic inter-channel correlations~\cite{ell2007hypercomplex,zhang1997quaternions}. Quaternion representation addresses this limitation by jointly encoding the RGB channels
into a single quaternion entity, enabling unified and correlation-aware color
modeling. Motivated by these advantages, tensor completion has been extended from the
real-valued domain to the quaternion domain for color image and video recovery.
Let $\mathcal{M} \in \mathbb{Q}^{n_1 \times n_2 \times n_3}$ denote an incomplete
quaternion-valued tensor, where $\mathbb{Q}$ is the quaternion field and
$\Omega$ is the index set of observed entries. Quaternion tensor completion
seeks a low-rank quaternion tensor $\mathcal{X}$ by solving
\begin{equation}
\min_{\mathcal{X} \in \mathbb{Q}^{n_1 \times n_2 \times n_3}} 
\ \|\mathcal{X}\|_{\mathrm{QTNN}}
\quad \text{s.t.} \quad
\mathcal{P}_{\Omega}(\mathcal{X}) = \mathcal{P}_{\Omega}(\mathcal{M}),
\end{equation}
where $\mathcal{P}_{\Omega}$ acts element-wise on quaternion entries and $\|\cdot\|_{\mathrm{QTNN}}$ denotes the quaternion tensor nuclear norm based on quaternion tensor SVD~\cite{qin2022tqsvd,jifei2023mqsvd}.
More recently, quaternion tensor completion methods have been proposed for color
video recovery, where each video frame is modeled as a quaternion matrix and the
entire sequence is represented as a higher-order quaternion tensor along the
temporal dimension \cite{zahir2025quaterniontensor,yang2024qtcsparse}. Wu
\emph{et al.}~\cite{fengshengwu2025complex} introduced a complex representation
of third-order quaternion tensors that converts quaternion tensor operations
into equivalent complex matrix operations, enabling nuclear norm minimization
for video inpainting. Subsequently, Wu \emph{et al.}~\cite{fengsheng2025quaternion} proposed a quaternion tensor tri-factorization model for low-rank approximation, demonstrating
improved reconstruction performance. To further enhance low-rank modeling
flexibility, Jiao \emph{et al.}~\cite{jiao2026lowrankquaternion} developed a
truncated nuclear norm regularization framework for quaternion tensor
completion, and Sun \emph{et al.}~\cite{sun2025quaterniontcqr} proposed a QR
decomposition-based quaternion tensor completion method that provides an
efficient solution for large-scale problems. Experimental results
consistently demonstrate that quaternion tensor models outperform their
real-valued counterparts in terms of reconstruction accuracy and visual quality,
especially under severe missing data.

Despite these advances, most quaternion tensor completion methods rely on linear
transform, such as Fourier or cosine transforms along the temporal
mode, and employ linear tensor nuclear norm surrogates. Linear transforms may
fail to sufficiently reduce the rank of transformed frontal slices when the
sampling rate is low or the video dynamics are complex, which restricts the
representation capability of linear quaternion TNN models. In the real-valued
setting, this limitation has been addressed by nonlinear transform induced
tensor nuclear norm (NTTNN) models, which combine learned linear transforms with
element-wise nonlinear mappings to enhance low-rankness in the transformed
domain \cite{benzheng2022nonlineartrans,wang2024conot}. However, extending such nonlinear TNN frameworks to quaternion-valued tensors is nontrivial due to the non-commutativity of quaternion multiplication and the complexity of quaternion singular value decomposition. To address these
challenges, we exploit a real embedding of quaternion tensors, which has been
shown to provide an equivalent representation for quaternion optimization
problems while preserving critical points and optimality conditions
\cite{liqunqi2022quaternionmatrixoptimization}. In this work, we propose
a nonlinear transform induced quaternion tensor nuclear norm model for color
video inpainting. To overcome the algebraic difficulties inherent in quaternion
operations, we introduce a real embedding of quaternion tensors, which allows
quaternion-valued computations to be equivalently expressed using standard real
matrix operations. 

Our main contributions are summarized as follows:
\begin{itemize}
\item We introduce QNTTNN, a nonlinear transform induced quaternion tensor nuclear norm model for color video inpainting, which jointly processes all RGB channels via quaternion representation and real embedding, thereby better preserving inter-channel correlations.
\item We formulate the corresponding quaternion tensor completion problem and develop a proximal alternating minimization algorithm whose convergence to critical points is guaranteed by KŁ-type analysis in the embedded domain.
\item Extensive experiments on benchmark color video inpainting datasets demonstrate that QNTTNN consistently outperforms channel-wise NTTNN and TNN tensor completion methods, achieving superior visual quality in challenging missing-data scenarios.
\end{itemize}

The remainder of this paper is organized as follows.
Section 2 introduces the preliminaries, including the fundamentals of quaternion algebra and quaternion-based norms. Section 3 presents the proposed QNTTNN for tensor completion, together with its formulation and solution via a proximal alternating minimization algorithm. Section 4 provides the convergence analysis of the proposed algorithm. Section 5 reports extensive numerical experiments to evaluate the effectiveness of the proposed method and compares it with several state-of-the-art approaches. Finally, Section 6 concludes the paper and outlines possible directions for future research.

\section{Preliminaries}
In this section, we review fundamental quaternion concepts and definitions that will be used throughout this work. In this work, lowercase letters denote scalars, uppercase letters denote quaternion matrices, and calligraphic uppercase letters denote quaternion tensors. The quaternion algebra $\mathbb{Q}$ extends the complex field $\mathbb{C}$ with three imaginary units $\mathbf{i},\mathbf{j},\mathbf{k}$ satisfying 
$\mathbf{i}^2 = \mathbf{j}^2 = \mathbf{k}^2 = \mathbf{i}\mathbf{j}\mathbf{k} = -1$.  
A quaternion $q \in \mathbb{Q}$ is defined as
$q = a_s + a_x\mathbf{i} + a_y\mathbf{j} + a_z\mathbf{k},$
where $a_s, a_x, a_y, a_z \in \mathbb{R}$ denote the scalar and three vector components, respectively. The conjugate of $q$ is defined as 
$q^* = a_s - a_x\mathbf{i} - a_y\mathbf{j} - a_z\mathbf{k},$
and the modulus is defined as
$ |q| = \sqrt{q q^*} = \sqrt{a_s^2 + a_x^2 + a_y^2 + a_z^2}.$
For a third-order quaternion tensor $\mathcal{X} \in \mathbb{Q}^{n_1 \times n_2 \times n_3}$, its frontal slices are denoted by $\mathcal{X}^{(i)} = \mathcal{X}(:,:,i)$ for $i=1,2,\ldots,n_3$, while tube fibers and mode-wise fibers are denoted by $\mathcal{X}(i,j,:)$, $\mathcal{X}(i,:,k)$, and $\mathcal{X}(:,j,k)$, respectively.

\begin{definition}\cite{yang2016realstructure}
\label{def:block_matrix}
Let ${X} = X_s + X_x\mathbf{i} + X_y\mathbf{j} + X_z\mathbf{k} \in \mathbb{Q}^{n_1 \times n_2}$, where
$X_s, X_x, X_y, X_z \in \mathbb{R}^{n_1 \times n_2}$.
The full block real representation of ${X}$ is defined as:
\begin{equation*}
{X}^R = \begin{bmatrix}
X_s & -X_x & -X_y & -X_z \\
X_x & X_s & -X_z & X_y \\
X_y & X_z & X_s & -X_x \\
X_z & -X_y & X_x & X_s
\end{bmatrix} \in \mathbb{R}^{4n_1 \times 4n_2},\quad {X}_c^R = \begin{bmatrix} X_s \\ X_x \\ X_y \\ X_z \end{bmatrix} \in \mathbb{R}^{4n_1 \times n_2}.
\end{equation*}
\end{definition}
 Here, $X^R$ denotes the \emph{real block representation} of the quaternion matrix $X$, while $X_c^R$ denotes the \emph{column-stacked real representation}, obtained by stacking the real components of $X$ vertically. This compact representation uniquely determines ${X}$ and reduces the storage from $O(16n_1n_2)$ to $O(4n_1n_2)$. To facilitate computations in the real-valued domain, we recall several fundamental properties of the real representation of quaternion matrices, which will be used throughout the subsequent analysis.
\begin{properties}\cite{yang2016realstructure}
\label{thm:column_properties}
Let ${X}, {Y} \in \mathbb{Q}^{n_1 \times n_2}$, ${Z} \in \mathbb{Q}^{n_2 \times n_3}$, $q \in \mathbb{Q}^{n_1}$, and $a \in \mathbb{R}$. Then:
\begin{enumerate}
    \item  $({X} + {Y})_c^R = {X}_c^R + {Y}_c^R$ and $(a{X})_c^R = a{X}_c^R$
    \item  $({XZ})_c^R = {X}^R {Z}_c^R$
    \item  $({X}^H)_c^R = (({X}^R)^T)_c$
    \item  $\|{X}\|_F^{\mathbb{Q}} = \|{X}_c^R\|_F^{\mathbb{R}}$ and $\|q\|_2^{\mathbb{Q}} = \|q_c^R\|_2^{\mathbb{R}}$
\end{enumerate}
\end{properties}

\begin{definition}[Real Structure–Preserving Representation for Quaternion Tensors]
\label{def:tensor_real_embedding}

For a quaternion tensor 
$\mathcal{X} \in \mathbb{Q}^{n_1 \times n_2 \times n_3}$,
the full real structure–preserving representation is defined
frontal-slice-wise. Let $\mathcal{X}^{(k)} \in \mathbb{Q}^{n_1 \times n_2}$
denote the $k$-th frontal slice of $\mathcal{X}$,
where $k = 1,\ldots,n_3$.
Then $\mathcal{X}^{(k)}$ admits the representation
$
\mathcal{X}^{(k)}
=
\mathcal{X}_s^{(k)}
+
\mathcal{X}_x^{(k)}\mathbf{i}
+
\mathcal{X}_y^{(k)}\mathbf{j}
+
\mathcal{X}_z^{(k)}\mathbf{k},
$
where
$\mathcal{X}_s^{(k)},\mathcal{X}_x^{(k)},\mathcal{X}_y^{(k)},\mathcal{X}_z^{(k)}
\in \mathbb{R}^{n_1 \times n_2}$
denote the scalar and three vector component matrices.

The full real representation of the slice
$\mathcal{X}^{(k)}$ is defined as
\begin{equation}
\mathcal{X}^{R}(:,:,k)
=
(\mathcal{X}^{(k)})^{R}
=
\begin{bmatrix}
\mathcal{X}_s^{(k)} & -\mathcal{X}_x^{(k)} & -\mathcal{X}_y^{(k)} & -\mathcal{X}_z^{(k)} \\
\mathcal{X}_x^{(k)} & \mathcal{X}_s^{(k)} & -\mathcal{X}_z^{(k)} & \mathcal{X}_y^{(k)} \\
\mathcal{X}_y^{(k)} & \mathcal{X}_z^{(k)} & \mathcal{X}_s^{(k)} & -\mathcal{X}_x^{(k)} \\
\mathcal{X}_z^{(k)} & -\mathcal{X}_y^{(k)} & \mathcal{X}_x^{(k)} & \mathcal{X}_s^{(k)}
\end{bmatrix}
\in
\mathbb{R}^{4n_1 \times 4n_2}.
\end{equation}

The full tensor real representation is defined as $\mathcal{X}^{R}
\in
\mathbb{R}^{4n_1 \times 4n_2 \times n_3},$ obtained by applying the above mapping to each frontal slice.
Under this embedding, the Frobenius norm satisfies $(\|\mathcal{X}^{R}\|_{F}^{\mathbb{R}})^2
=
4(\|\mathcal{X}\|_{F}^{\mathbb{Q}})^2.$
The column-based real representation of the $k$-th frontal slice is defined as:
\begin{equation*}
\mathcal{X}_c^R{(:,:,k)} = (\mathcal{X}^{(k)})_c^R = \begin{bmatrix} \mc{X}_s^{(k)} \\ \mc{X}_x^{(k)} \\ \mc{X}_y^{(k)} \\ \mc{X}_z^{(k)} \end{bmatrix} \in \mathbb{R}^{4n_1 \times n_2}
\end{equation*}
and the full tensor column representation is defined as: $\mathcal{X}_c^R \in \mathbb{R}^{4n_1 \times n_2 \times n_3}.$
\end{definition}



\begin{definition}~\cite{jifei2023mqsvd}[Quaternion Mode-3 Product]
\label{def:mode3}
Let $\mathcal{X} \in \mathbb{Q}^{n_1 \times n_2 \times n_3}$ and ${D} \in \mathbb{R}^{r \times n_3}$ be a \emph{real-valued} matrix. The quaternion mode-3 product is defined as:
\begin{equation*}
(\mathcal{X} \times_3 {D})_{ij\ell} = \sum_{k=1}^{n_3} \mathcal{X}_{ijk} d_{\ell k}, \quad \ell = 1,\ldots,r.
\end{equation*}
\end{definition}
Since ${D}$ is real-valued, it commutes with quaternion components, yielding:
\begin{equation*}
(\mathcal{X} \times_3 {D})^R = \mathcal{X}^R \times_3 {D}.
\end{equation*}

\begin{remark}
Using a real-valued transform ${D}$ ensures that the mode-3 product in the quaternion domain corresponds exactly to the mode-3 product in the real representation, avoiding complex quaternion multiplication overhead while preserving the quaternion algebraic structure.
\end{remark}

\begin{theorem}[Quaternion Nuclear Norm via Full Real Representation]
\label{thm:nuclear_norm_correct}
Let $X \in \mathbb{Q}^{n_1 \times n_2}$ and let $X^R \in \mathbb{R}^{4n_1 \times 4n_2}$ 
be its full block real representation as in Definition~\ref{def:block_matrix}.
Then
\begin{equation*}
\|X\|_*^{\mathbb Q}
=
\frac14 \|X^R\|_*^{\mathbb R}.
\end{equation*}
\end{theorem}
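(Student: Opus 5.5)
The plan is to use the quaternion singular value decomposition (QSVD) together with the multiplicativity of the real block representation. By the QSVD there exist unitary quaternion matrices $U \in \mathbb{Q}^{n_1\times n_1}$ and $V\in\mathbb{Q}^{n_2\times n_2}$ and a real nonnegative diagonal $\Sigma\in\mathbb{R}^{n_1\times n_2}$ with $X = U\Sigma V^H$. Then $\|X\|_*^{\mathbb Q}=\sum_i \sigma_i$, where the $\sigma_i$ are the diagonal entries of $\Sigma$. The goal is to transport this factorization to the real side and read off the real singular values of $X^R$.

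\emph{Step 1: homomorphism properties of $(\cdot)^R$.} I would verify, by direct block multiplication using $\mathbf{i}^2=\mathbf{j}^2=\mathbf{k}^2=\mathbf{i}\mathbf{j}\mathbf{k}=-1$, that:
\begin{itemize}
\item $(XZ)^R = X^R Z^R$;
\item $(X^H)^R=(X^R)^T$;
\item $(I_n)^R=I_{4n}$.
\end{itemize}
The first is the block-level analogue of Properties~\ref{thm:column_properties}(2). Indeed, $(XZ)_c^R = X^R Z_c^R$, and each block column of $Z^R$ is the column representation of $Z$, $Z\mathbf{i}$, $Z\mathbf{j}$ or $Z\mathbf{k}$ up to signs, so the first property follows column-block by column-block. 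The second is read off from the sign pattern in Definition~\ref{def:block_matrix}. Consequently, if $U$ is unitary, then $(U^R)^TU^R=(U^HU)^R=I$, so $U^R$ and $V^R$ are real orthogonal.

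\emph{Step 2: transfer the SVD.} Applying Step 1 gives
\begin{equation*}
X^R = U^R\,\Sigma^R\,(V^R)^T .
\end{equation*}
Since $\Sigma$ is real, $\Sigma^R=\mathrm{blkdiag}(\Sigma,\Sigma,\Sigma,\Sigma)$. After a permutation of rows and columns (which is orthogonal), this is a nonnegative diagonal matrix in which each $\sigma_i$ appears exactly four times. Hence this is a genuine real SVD of $X^R$ up to orthogonal permutations. Because the nuclear norm is unitarily invariant, $\|X^R\|_*^{\mathbb R}=\|\Sigma^R\|_*=4\sum_i\sigma_i=4\|X\|_*^{\mathbb Q}$, which is the claim.

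The main obstacle is Step 1, specifically the multiplicativity $(XZ)^R=X^RZ^R$, which requires careful sign bookkeeping because quaternion multiplication is noncommutative. I would handle it by writing $X=X_s+X_x\mathbf{i}+X_y\mathbf{j}+X_z\mathbf{k}$ and checking the identity on each basis product. One possible difficulty is that the layout in Definition~\ref{def:block_matrix} may correspond to left multiplication by $X$, for which a hidden transpose or reordering of units could appear. If the sign pattern turns out to represent an anti-homomorphism instead, the argument still works: $X^R=(V^R)^{\pm}\Sigma^R(U^R)^{\pm}$ with orthogonal factors, and unitary invariance of the nuclear norm gives the same conclusion. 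I would also cite the existence of the QSVD with real singular values (Zhang's theorem) as a known result rather than prove it.
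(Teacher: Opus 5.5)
Your proposal is correct and follows the same route as the paper's proof: take the QSVD $X=U\Sigma V^H$, transport it to $X^R=U^R\Sigma^R(V^R)^T$, and use the fact that each $\sigma_i$ appears four times in $\Sigma^R$. Your Step 1 supplies details the paper takes for granted, namely $(XZ)^R=X^RZ^R$ and $(X^H)^R=(X^R)^T$, and hence the orthogonality of $U^R$ and $V^R$. The anti-homomorphism fallback is not needed: the sign pattern in Definition~\ref{def:block_matrix} is exactly the left-multiplication representation, whose block columns are $Z_c^R,(Z\mathbf{i})_c^R,(Z\mathbf{j})_c^R,(Z\mathbf{k})_c^R$, with no extra signs.
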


\begin{proof}
Let $X = U \Sigma V^H$ be the quaternion SVD of $X$,
with $\Sigma = \mathrm{diag}(\sigma_1,\dots,\sigma_r)$.
Under the real embedding, $X^R = U^R \Sigma^R (V^R)^T,$ where each quaternion singular value $\sigma_i$ appears exactly four times 
in $\Sigma^R$ due to the isomorphism $\mathbb Q \cong \mathbb R^4$.
Hence
\[
\|X^R\|_*^{\mathbb R}
=
4 \sum_{i=1}^r \sigma_i
=
4\|X\|_*^{\mathbb Q}.
\]
\end{proof}


\begin{definition}~\cite{zahir2025quaterniontensor}[Quaternion Tensor Nuclear Norm (QTNN)]
\label{def:qtnn}
For $\mathcal{X} \in \mathbb{Q}^{n_1 \times n_2 \times n_3}$ and a real invertible matrix ${T} \in \mathbb{R}^{n_3 \times n_3}$, the \emph{transform-based} quaternion tensor nuclear norm is defined as:
\begin{equation*}
\|\mathcal{X}\|_{\text{QTNN}} = \sum_{i=1}^{n_3} \|(\mathcal{X} \times_3 {T})^{(i)}\|_*^{\mathbb{Q}},
\end{equation*}
where $(\mathcal{X} \times_3 {T})^{(i)} \in \mathbb{Q}^{n_1 \times n_2}$ denotes the $i$-th frontal slice of the \emph{transformed} tensor $\mathcal{Z} = \mathcal{X} \times_3 {T} \in \mathbb{Q}^{n_1 \times n_2 \times n_3}$.

Using the real representation from Theorem \ref{thm:nuclear_norm_correct}, this is equivalently expressed as:
\begin{equation}
\|\mathcal{X}\|_{\text{QTNN}} = \frac{1}{4}\sum_{i=1}^{n_3} \|(\mathcal{Z}^{(i)})^R\|_*^{\mathbb{R}},
\end{equation}
where $(\mathcal{Z}^{(i)})^R \in \mathbb{R}^{4n_1 \times 4n_2}$ is the realrepresentation of the $i$-th frontal slice of $\mathcal{Z}$.
\end{definition}

\begin{remark}
The nuclear norm is computed on the frontal slices of the \emph{transformed} tensor $\mathcal{Z}=\mathcal{X}\times_3 {T}$ rather than on the original tensor $\mathcal{X}$. This formulation follows the quaternion tensor t-product framework, where low-rankness is characterized in the transform domain instead of the spatial domain~\cite{qin2022tqsvd,jifei2023mqsvd}.
When ${T}$ is chosen as the discrete Fourier transform matrix $F_{n_3}$ or the discrete cosine transform matrix $C_{n_3}$, the formulation reduces to the classical DFT-based and DCT-based QTNN models. More generally, ${T}$ can be a learned linear transform~\cite{penglingwu2024efficient} ${T}\in\mathbb{R}^{r\times n_3}$ with $r<n_3$ and $TT^T=I_r$, which enables dimensionality reduction while preserving the essential low-rank structure.
\end{remark}


The following lemma shows that the Frobenius norm of a quaternion tensor is preserved under semi-orthogonal transformations along the third mode, which follows from norm preservation in the column representation.
\begin{lemma}
\label{lem:parseval}
Let $\mathcal{X} \in \mathbb{Q}^{n_1 \times n_2 \times n_3}$, $\mathcal{Z} \in \mathbb{Q}^{n_1 \times n_2 \times r}$, and ${T} \in \mathbb{R}^{r \times n_3}$ be a semi-orthogonal matrix satisfying ${T}{T}^T = I_r$. Then the following identity holds:
\begin{equation}
\|\mathcal{X} - \mathcal{Z} \times_3 {T}^T\|_F^2 = \|\mathcal{X} \times_3 {T} - \mathcal{Z}\|_F^2.
\end{equation}
\end{lemma}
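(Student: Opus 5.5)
The plan is to reduce everything to real matrices and then to a mode-3 unfolding, where the identity becomes a statement about left multiplication by $T$ and $T^T$. First, since $T$ is real, the remark after Definition~\ref{def:mode3} gives $(\mathcal{Z}\times_3 T^T)_c^R=\mathcal{Z}_c^R\times_3 T^T$ and $(\mathcal{X}\times_3 T)_c^R=\mathcal{X}_c^R\times_3 T$. The column representation is linear by Properties~\ref{thm:column_properties}(1), and it preserves the Frobenius norm slice by slice by Properties~\ref{thm:column_properties}(4). Hence both sides of the claimed identity equal the corresponding expressions for the real tensors $\mathcal{X}_c^R\in\mathbb{R}^{4n_1\times n_2\times n_3}$ and $\mathcal{Z}_c^R\in\mathbb{R}^{4n_1\times n_2\times r}$. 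Using the column representation here, rather than the full block form, avoids the factor $4$.

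Second, I would pass to mode-3 unfoldings. Write $A=(\mathcal{X}_c^R)_{(3)}\in\mathbb{R}^{n_3\times 4n_1n_2}$ and $B=(\mathcal{Z}_c^R)_{(3)}\in\mathbb{R}^{r\times 4n_1n_2}$. Then $(\mathcal{Z}\times_3T^T)_{(3)}=T^TB$ and $(\mathcal{X}\times_3T)_{(3)}=TA$. Expanding both squared norms gives
\begin{align*}
\|A-T^TB\|_F^2&=\|A\|_F^2-2\langle TA,B\rangle+\operatorname{tr}(B^TTT^TB),\\
\|TA-B\|_F^2&=\|TA\|_F^2-2\langle TA,B\rangle+\|B\|_F^2.
\end{align*}
The hypothesis $TT^T=I_r$ makes the last terms agree, and the cross terms agree by the adjoint relation $\langle A,T^TB\rangle=\langle TA,B\rangle$. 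It remains to compare $\|A\|_F^2$ with $\|TA\|_F^2=\operatorname{tr}(A^TT^TTA)$.

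This last comparison is the real obstacle. When $r=n_3$, $T$ is orthogonal, so $T^TT=I$ and the identity holds exactly. When $r<n_3$, $T^TT$ is only the orthogonal projector onto $\operatorname{range}(T^T)$. The expansions above show that the two sides differ by exactly
\[
\|\mathcal{X}\|_F^2-\|\mathcal{X}\times_3T\|_F^2=\|\mathcal{X}\times_3(I-T^TT)\|_F^2\ge 0,
\]
and this quantity is independent of $\mathcal{Z}$. The equality therefore holds exactly precisely when the mode-3 fibers of $\mathcal{X}$ lie in $\operatorname{range}(T^T)$. In general the correct statement is equality up to this $\mathcal{Z}$-independent constant.

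I would state the result in that corrected form, or add the hypothesis $r=n_3$ or $\mathcal{X}=\mathcal{X}\times_3T^TT$. I would also note that the corrected form is all that is needed downstream: minimizing either side over $\mathcal{Z}$ yields the same minimizers and the same proximal subproblems.
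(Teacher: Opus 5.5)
Your analysis is correct, and it exposes an error in the paper's own proof. The paper follows your route exactly: column representation, then mode-3 unfolding with $A=(\mathcal{X}_c^R)_{(3)}$ and $B=(\mathcal{Z}_c^R)_{(3)}$. It then replaces $\|A-T^TB\|_F^2$ by $\|TA-B\|_F^2$, justified only by ``$TT^T=I_r$''. That step needs $T^TT=I_{n_3}$. The hypothesis $TT^T=I_r$ makes $T^T$ an isometry, but $T$ itself preserves norms only on $\mathrm{range}(T^T)$.

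So the lemma as printed is false whenever $r<n_3$. This is precisely the regime the paper advertises for learned transforms, and it is the one used in the experiments ($r\le 10$, $n_3=100$). To make ``false as stated'' airtight rather than inferred from your formula, record an explicit instance. Take $n_1=n_2=1$, $n_3=2$, $r=1$, $T=(1\ \ 0)$, $\mathcal{Z}=0$, and $\mathcal{X}$ with tube $(0,1)$. Then the left-hand side is $1$ and the right-hand side is $0$.

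Your discrepancy term is right. A slightly quicker derivation uses the splitting $A-T^TB=T^T(TA-B)+(I-T^TT)A$. The two summands are orthogonal because $T(I-T^TT)=0$, and $\|T^T(TA-B)\|_F=\|TA-B\|_F$, so
\[
\|A-T^TB\|_F^2=\|TA-B\|_F^2+\|(I-T^TT)A\|_F^2 .
\]
In particular the printed identity is in general only the inequality ``$\ge$''.

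Your downstream remark is also correct. The identity is used only in the $\mathcal{Z}$-subproblem, where $\mathcal{X}^{k+1}$ and $T^k$ are fixed. There the extra term is absorbed into the constant, so the center $\mathcal{G}^{R,k}$ and the scalar Newton problems are unchanged. The same argument works verbatim for the full embedding $\mathcal{X}^R$ used in that subproblem.

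Note, however, that the extra term depends on $T$, so the identity could not be used to simplify the $T$-subproblem. The paper's $T$-update avoids this by expanding the left-hand side directly, which is legitimate.

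One small point in your first step: the paper states commutation of $\times_3$ with the embedding only for the full representation $(\cdot)^R$. For $(\cdot)_c^R$ it holds for the same reason, since a real matrix acts on each of the four real components separately, and it deserves a sentence.
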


\begin{proof}
In the column representation, using the norm preservation property from~\ref{thm:column_properties}:
\begin{align*}
\|\mathcal{X} - \mathcal{Z} \times_3 {T}^T\|_F^2 &= \|(\mathcal{X} - \mathcal{Z} \times_3 {T}^T)_c^R\|_F^2 = \|\mathcal{X}_c^R - \mathcal{Z}_c^R \times_3 {T}^T\|_F^2 \\
&= \|(\mathcal{X}_c^R)_{(3)} - {T}^T (\mathcal{Z}_c^R)_{(3)}\|_F^2 \quad \text{(using mod-3 unfolding) }\\
&= \|{T}(\mathcal{X}_c^R)_{(3)} - (\mathcal{Z}_c^R)_{(3)}\|_F^2 \quad \text{(by } {T}{T}^T = I_r\text{)} \\
&= \|(\mathcal{X}_c^R \times_3 {T}) - \mathcal{Z}_c^R\|_F^2 
= \|\mathcal{X} \times_3 {T} - \mathcal{Z}\|_F^2. 
\end{align*}
\end{proof}
\section{Quaternion Nonlinear Transform Induced Tensor Nuclear Norm}

In this section, we present the quaternion nonlinear transform–induced tensor nuclear norm (QNTTNN) for low-rank modeling of color video data. Most existing transform-based TNN methods use linear transforms to encourage low-rank structure in the frontal slices of a tensor. However, linear transforms are often insufficient to effectively concentrate singular value energy, especially for highly correlated multi-channel data. In addition, processing RGB channels separately ignores their intrinsic correlations, which leads to a more spread-out singular value distribution and weaker low-rank representations. To overcome these limitations, we propose a nonlinear transform–based regularization framework in the quaternion domain, where all color channels are jointly represented as a single algebraic entity, resulting in stronger singular value concentration and more effective low-rank modeling.

Let $\mathcal{X} \in \mathbb{Q}^{n_1 \times n_2 \times n_3}$ be a third-order quaternion tensor. To enhance the expressive power of low-rank quaternion representations, we define a composite nonlinear transform as
\begin{equation}
\Psi(\mathcal{X}) = \Phi(\mathcal{X} \times_3 T),
\label{eq:quat_transform}
\end{equation}
where $\times_3$ denotes the mode-3 quaternion tensor--matrix product, $T \in \mathbb{R}^{r \times n_3}$ is a real-valued semi-orthogonal transform satisfying $TT^T = I_r$, and $\Phi(\cdot)$ represents an element-wise nonlinear activation function applied component-wise in the real-valued embedding of the quaternion tensor. By combining the structural advantages of quaternion representation with the enhanced modeling capability of nonlinear transforms, the proposed QNTTNN provides a more faithful low-rank characterization for color video data.

\begin{remark}
We use a \emph{real-valued} transform $T$ rather than a quaternion-valued transform for computational efficiency and theoretical tractability. This ensures that the mode-3 product commutes naturally with quaternion components (see Definition \ref{def:mode3}).
\end{remark}
\begin{definition}[Element-wise Nonlinearity in the Real Domain]
\label{def:nonlinearity}
For a quaternion tensor $\mathcal{Z} \in \mathbb{Q}^{n_1 \times n_2 \times r}$, the nonlinear activation $\Phi(\cdot)$ is defined as an element-wise operation on its real-valued representation:
\begin{equation*}
\Phi(\mathcal{Z})^R \;\triangleq\; \phi\!\left(\mathcal{Z}^R\right),
\end{equation*}
where $\phi: \mathbb{R} \to \mathbb{R}$ denotes a scalar activation function (e.g., $\tanh$ or sigmoid) applied element-wise, and $\mathcal{Z}^R \in \mathbb{R}^{4n_1 \times 4n_2 \times r}$ is the real embedding of $\mathcal{Z}$ in real form.
\end{definition}

The transform $\Psi(\cdot)$ jointly performs spectral decorrelation along the third mode and nonlinear contraction of singular spectra, extending the nonlinear transform framework in~\cite{benzheng2022nonlineartrans} to the quaternion setting. Based on \eqref{eq:quat_transform}, we define the \emph{Quaternion Nonlinear Transform Induced Tensor Nuclear Norm (QNTTNN)}.

\begin{definition}
\label{def:qnttnn}
For a quaternion tensor $\mathcal{X} \in \mathbb{Q}^{n_1 \times n_2 \times n_3}$, the quaternion nonlinear transform–induced tensor nuclear norm (QNTTNN) is defined as
\begin{equation*}
\|\mathcal{X}\|_{\mathrm{QNTTNN}}
\;\triangleq\;
\sum_{i=1}^{r} \bigl\|\Phi\bigl(\mathcal{Z}^{(i)}\bigr)\bigr\|_*^{\mathbb{Q}},
\qquad
\mathcal{Z} = \mathcal{X} \times_3 T,
\label{eq:qnttnn_def}
\end{equation*}
where $\mathcal{Z}^{(i)} = \mathcal{Z}(:,:,i) \in \mathbb{Q}^{n_1 \times n_2}$ denotes the $i$-th frontal slice of $\mathcal{Z}$, and $\|\cdot\|_*^{\mathbb{Q}}$ denotes the quaternion nuclear norm.
\end{definition}

\noindent Using the real representation (Theorem~\ref{thm:nuclear_norm_correct}), the QNTTNN can be equivalently expressed as
\begin{equation}
\|\mathcal{X}\|_{\mathrm{QNTTNN}}
=
\frac{1}{4}
\sum_{i=1}^{r}
\bigl\|
\bigl(\Phi(\mathcal{Z}^{(i)})\bigr)^R
\bigr\|_*^{\mathbb{R}},
\label{eq:qnttnn_real}
\end{equation}
where $\bigl(\Phi(\mathcal{Z}^{(i)})\bigr)^R \in \mathbb{R}^{4n_1 \times 4n_2}$ denotes the real-valued representation of the $i$-th nonlinearly transformed frontal slice.

\begin{remark}
Since $\phi$ is nonlinear in general, the mapping $\Phi(\cdot)$ is not linear 
nor positively homogeneous. Consequently, the regularization introduced 
above is nonconvex and does not define a norm in the strict functional 
analysis sense.
\end{remark}

\section{Quaternion Tensor Completion Model}

In this section, we formulate a quaternion tensor completion model under the proposed quaternion nonlinear transform--induced tensor nuclear norm (QNTTNN). The goal is to recover missing entries in quaternion-valued data while preserving the intrinsic coupling among color and multi-channel components.

Let $\mathcal{M} \in \mathbb{Q}^{n_1 \times n_2 \times n_3}$ be a partially observed third-order quaternion tensor, and let $\Omega$ denote the index set of observed entries. We consider the following quaternion tensor completion problem:
\begin{equation}
\begin{aligned}
\min_{\mathcal{X},\, \mathcal{Z},\, T}
\quad &
\sum_{i=1}^{r} \bigl\|\Phi(\mathcal{Z}^{(i)})\bigr\|_*^{\mathbb{Q}} \\
\text{s.t.}\quad
& \mathcal{P}_\Omega(\mathcal{X}) = \mathcal{P}_\Omega(\mathcal{M}), \\
& \mathcal{X} = \mathcal{Z} \times_3 T^T, \\
& TT^T = I_r,
\end{aligned}
\label{eq:qnttnn_model}
\end{equation}
where $\mathcal{P}_\Omega(\cdot)$ denotes the sampling operator that preserves the entries indexed by $\Omega$,  
$\mathcal{Z} \in \mathbb{Q}^{n_1 \times n_2 \times r}$ is the transform-domain tensor,  
$T \in \mathbb{R}^{r \times n_3}$ is a real-valued semi-orthogonal matrix satisfying $TT^T = I_r$,  
and $\mathcal{Z}^{(i)} = \mathcal{Z}(:,:,i)$ denotes the $i$-th frontal slice of $\mathcal{Z}$.

\medskip
To facilitate optimization, we introduce an auxiliary variable $\mathcal{Y} = \Phi(\mathcal{Z})$ and reformulate \eqref{eq:qnttnn_model} using a half-quadratic splitting strategy:
\begin{equation}
\begin{aligned}
\min_{\mathcal{X},\, \mathcal{Y},\, \mathcal{Z},\, T}
\quad &
\sum_{i=1}^{r} \|\mathcal{Y}^{(i)}\|_*^{\mathbb{Q}}
+ \frac{\alpha}{2}\|\mathcal{X} - \mathcal{Z} \times_3 T^T\|_F^2
+ \frac{\beta}{2}\|\mathcal{Y} - \Phi(\mathcal{Z})\|_F^2  + \Xi(\mathcal{X}) + \Xi(T),
\end{aligned}
\label{eq:qnttnn_relax}
\end{equation}
where
\begin{itemize}
    \item $\Xi(\mathcal{X})$ is the indicator function enforcing data fidelity on the observed quaternion entries:
    \[
    \Xi(\mathcal{X}) =
    \begin{cases}
    0, & \text{if } \mathcal{P}_\Omega(\mathcal{X}) = \mathcal{P}_\Omega(\mathcal{M}), \\
    +\infty, & \text{otherwise},
    \end{cases}
    \]
    \item $\Xi(T)$ is the indicator function enforcing real-valued semi-orthogonality:
    \[
    \Xi(T) =
    \begin{cases}
    0, & \text{if } TT^T = I_r, \\
    +\infty, & \text{otherwise},
    \end{cases}
    \]
    \item $\alpha > 0$ and $\beta > 0$ are penalty parameters balancing the data fidelity, transform consistency, and nonlinear regularization terms.
\end{itemize}
\medskip
\noindent
Now, using the real structure–preserving embedding and the associated norm-preservation properties (Theorems~\ref{thm:column_properties} and~\ref{thm:nuclear_norm_correct}), problem~\eqref{eq:qnttnn_relax} can be equivalently reformulated in the real domain as
\begin{equation}
\begin{aligned}
\min_{\mc{X}^R,\, \mc{Z}^R,\, \mc{Y}^R,\, T}
\quad &
\frac14
\sum_{i=1}^{r}
\|(\mc{Y}^{(i)})^R\|_*^{\mathbb R}
\\
&\quad +
\frac{\alpha}{8}
\|\mc{X}^R - \mc{Z}^R \times_3 T^T\|_F^2
+
\frac{\beta}{8}
\|\mc{Y}^R - \phi(\mc{Z}^R)\|_F^2
\\
&\quad +
\tilde{\Xi}(\mc{X}^R)
+
\Xi(T).
\end{aligned}
\label{eq:qnttnn_relax_real_correct}
\end{equation}
\noindent where $\mathcal{X}^R \in \mathbb{R}^{4n_1 \times 4n_2 \times n_3}$,  
$\mathcal{Z}^R \in \mathbb{R}^{4n_1 \times 4n_2 \times r}$, and  
$\mathcal{Y}^R \in \mathbb{R}^{4n_1 \times 4n_2 \times r}$ denote the real structure–preserving embeddings of the quaternion tensors $\mathcal{X}$, $\mathcal{Z}$, and $\mathcal{Y}$, respectively.  
The matrix $(\mathcal{Y}^{(i)})^R \in \mathbb{R}^{4n_1 \times 4n_2}$ corresponds to the $i$-th frontal slice of $\mathcal{Y}^R$, and $\phi(\cdot)$ denotes the element-wise real activation associated with the quaternion nonlinearity $\Phi(\cdot)$ through the real embedding. The operator $\tilde{\Xi}(\mathcal{X}^R)$ enforces data consistency in the real domain:
\[
\tilde{\Xi}(\mathcal{X}^R) =
\begin{cases}
0, & \text{if } \mathcal{P}_\Omega(\mathcal{X}^R) = \mathcal{P}_\Omega(\mathcal{M}^R), \\
+\infty, & \text{otherwise}.
\end{cases}
\]

This real-domain formulation is fully equivalent to the original quaternion problem and enables efficient optimization using standard real-valued linear algebra operations, while preserving the quaternion algebraic structure through the real structure–preserving embedding.
\vspace{-0.5cm}
\section{Proximal Alternating Minimization Framework}
To solve the nonconvex and nonsmooth optimization problem in~\eqref{eq:qnttnn_relax}, we adopt a proximal alternating minimization (PAM) framework. The PAM strategy updates each block variable sequentially while fixing the remaining variables, and incorporates proximal regularization terms to ensure numerical stability and convergence.
Let
\[
\mathcal{L}(\mathcal{X}, \mathcal{Y}, \mathcal{Z}, T)
=
\sum_{i=1}^{r} \|\mathcal{Y}^{(i)}\|_*^{\mathbb{Q}}
+ \frac{\alpha}{2}\|\mathcal{X} - \mathcal{Z} \times_3 T^T\|_F^2
+ \frac{\beta}{2}\|\mathcal{Y} - \Phi(\mathcal{Z})\|_F^2
\]
denote the smooth part of the objective in~\eqref{eq:qnttnn_relax}, excluding the indicator functions.
At iteration $k$, the PAM updates are given by
\begin{equation}
\left\{
\begin{aligned}
\mathcal{X}^{k+1}
&\in
\arg\min_{\mathcal{X}}
\left\{
\mathcal{L}(\mathcal{X}, \mathcal{Y}^k, \mathcal{Z}^k, T^k)
+ \frac{\rho_1}{2}\|\mathcal{X} - \mathcal{X}^k\|_F^2
\right\}, \\
\mathcal{Y}^{k+1}
&\in
\arg\min_{\mathcal{Y}}
\left\{
\mathcal{L}(\mathcal{X}^{k+1}, \mathcal{Y}, \mathcal{Z}^k, T^k)
+ \frac{\rho_2}{2}\|\mathcal{Y} - \mathcal{Y}^k\|_F^2
\right\}, \\
\mathcal{Z}^{k+1}
&\in
\arg\min_{\mathcal{Z}}
\left\{
\mathcal{L}(\mathcal{X}^{k+1}, \mathcal{Y}^{k+1}, \mathcal{Z}, T^k)
+ \frac{\rho_3}{2}\|\mathcal{Z} - \mathcal{Z}^k\|_F^2
\right\}, \\
T^{k+1}
&\in
\arg\min_{T}
\left\{
\mathcal{L}(\mathcal{X}^{k+1}, \mathcal{Y}^{k+1}, \mathcal{Z}^{k+1}, T)
+ \frac{\rho_4}{2}\|T - T^k\|_F^2
\right\},
\end{aligned}
\right.
\label{eq:pam_updates}
\end{equation}
where $\rho_1,\rho_2,\rho_3,\rho_4>0$ are positive constants.

In the following, we derive efficient solutions for each subproblem by exploiting the  real representation of quaternion tensors.


\subsection{$\mathcal{X}$ Subproblem Formulation}

The update of $\mathcal{X}$ is obtained by solving
\begin{equation}
\mathcal{X}^{k+1}
=
\arg\min_{\mathcal{X} \in \mathbb{Q}^{n_1 \times n_2 \times n_3}}
\frac{\alpha}{2}\|\mathcal{X} - \mathcal{Z}^k \times_3 (T^k)^T\|_F^2
+ \frac{\rho_1}{2}\|\mathcal{X} - \mathcal{X}^k\|_F^2
,
\label{eq:X_subproblem}
\end{equation}
where $\Xi(\mathcal{X})$ enforces consistency with the observed entries. Let $\mathcal{W}^k = \mathcal{Z}^k \times_3 (T^k)^T.$
By completing the square, the quadratic terms in~\eqref{eq:X_subproblem} can be rewritten as
\begin{equation}
\frac{\alpha + \rho_1}{2}
\left\|
\mathcal{X}
-
\tilde{\mathcal{X}}^k
\right\|_F^2
+ \text{const},
\end{equation}
where the proximal center is defined as: $\tilde{\mathcal{X}}^k=\frac{\alpha \mathcal{W}^k + \rho_1 \mathcal{X}^k}{\alpha + \rho_1}.$

\begin{theorem}[Update Rule for $\mathcal{X}$]
\label{thm:X_update}
The solution of problem~\eqref{eq:X_subproblem} is given element-wise by
\begin{equation}
\mathcal{X}^{k+1}_{ijk}
=
\begin{cases}
\mathcal{M}_{ijk}, & (i,j,k) \in \Omega, \\[4pt]
\dfrac{
\alpha (\mathcal{Z}^k \times_3 (T^k)^T)_{ijk}
+ \rho_1 \mathcal{X}^k_{ijk}
}{\alpha + \rho_1},
& (i,j,k) \notin \Omega.
\end{cases}
\label{eq:X_update}
\end{equation}
\end{theorem}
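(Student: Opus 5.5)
The plan is to read problem~\eqref{eq:X_subproblem} as the minimization of the two quadratic terms together with the indicator $\Xi(\mathcal{X})$. The indicator is implicit in the PAM update~\eqref{eq:pam_updates}, since it is part of the full objective~\eqref{eq:qnttnn_relax}, and the text right after~\eqref{eq:X_subproblem} says explicitly that it enforces consistency with the observed entries. The other terms of $\mathcal{L}$ do not involve $\mathcal{X}$, so they drop out.

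First I will use the completing-the-square identity already written above. It rewrites the objective as $\frac{\alpha+\rho_1}{2}\|\mathcal{X}-\tilde{\mathcal{X}}^k\|_F^2+\text{const}$, with proximal center $\tilde{\mathcal{X}}^k=(\alpha\mathcal{W}^k+\rho_1\mathcal{X}^k)/(\alpha+\rho_1)$. It holds entrywise in the quaternion setting for the following reason. By Properties~\ref{thm:column_properties}(4), $\|\cdot\|_F^2$ equals the real Frobenius norm of the column representation. Properties~\ref{thm:column_properties}(1) makes the map $\mathcal{X}\mapsto\mathcal{X}_c^R$ linear over real scalars. The weights $\alpha,\rho_1$ are real, so the usual real identity $a\|x-u\|^2+b\|x-v\|^2=(a+b)\|x-\tfrac{au+bv}{a+b}\|^2+\text{const}$ applies directly to the stacked real vectors. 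The subproblem therefore becomes the Euclidean projection of $\tilde{\mathcal{X}}^k$ onto the affine set $\{\mathcal{X}:\mathcal{P}_\Omega(\mathcal{X})=\mathcal{P}_\Omega(\mathcal{M})\}$, scaled by the positive factor $(\alpha+\rho_1)/2$.

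Next I will split the objective by entry: $\sum_{(i,j,k)}|\mathcal{X}_{ijk}-\tilde{\mathcal{X}}^k_{ijk}|^2$, with $|\cdot|$ the quaternion modulus, which is also the Euclidean norm of the four real components. For $(i,j,k)\in\Omega$ the constraint fixes $\mathcal{X}_{ijk}=\mathcal{M}_{ijk}$. For $(i,j,k)\notin\Omega$ the term is unconstrained and strictly convex in the four real components of $\mathcal{X}_{ijk}$, so its unique minimizer is $\mathcal{X}_{ijk}=\tilde{\mathcal{X}}^k_{ijk}$. Substituting $\mathcal{W}^k=\mathcal{Z}^k\times_3(T^k)^T$ gives exactly~\eqref{eq:X_update}. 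Uniqueness follows from strong convexity, since $\alpha+\rho_1>0$, combined with the convexity of the affine feasible set. So the ``$\in\arg\min$'' can be written as an equality.

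I do not expect a real obstacle. The only delicate point is rigor about the quaternion setting: the squared Frobenius norm must be treated as a sum of moduli of entries, and the scalars must be real so that no non-commutativity arises. Both points are covered by Properties~\ref{thm:column_properties}. A second point is the mode-3 product with the real matrix $(T^k)^T$, which is itself a quaternion tensor computed entrywise as in Definition~\ref{def:mode3}, so the $(i,j,k)$ entry of $\mathcal{W}^k$ is well defined. One minor notational clash should be flagged: the slice index $k$ coincides with the iteration counter.
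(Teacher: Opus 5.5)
Your proposal is correct and follows essentially the paper's argument. Both fix the entries on $\Omega$ via the indicator and then minimize the strictly convex quadratic on $\Omega^c$ to obtain the weighted average $(\alpha\mathcal{W}^k+\rho_1\mathcal{X}^k)/(\alpha+\rho_1)$. The one minor difference is how the real-variable computation is justified: you use the column representation (or simply the entrywise modulus) and complete the square, while the paper passes to the full block embedding with the factor $4$ and applies the first-order optimality condition.
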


\begin{proof}
The indicator function $\Xi(\mathcal{X})$ enforces the equality
$\mathcal{P}_\Omega(\mathcal{X})=\mathcal{P}_\Omega(\mathcal{M})$.
Hence, on the observed set $\Omega$, the minimizer must satisfy
$\mathcal{X}_{ijk}^{k+1}=\mathcal{M}_{ijk}$. On the complementary set $\Omega^c$, the problem reduces to
\[
\min_{\mathcal{X}}
\;
\frac{\alpha}{2}
\bigl(\|
\mathcal{X} - \mathcal{Z}^k \times_3 (T^k)^T
\|_{F}^{\mathbb{Q}}\bigr)^2
+
\frac{\rho_1}{2}
\bigl(\|
\mathcal{X} - \mathcal{X}^k
\|_{F}^{\mathbb{Q}}\bigr)^2,
\]
which is a strictly convex quadratic problem and therefore admits a unique minimizer. Passing to the full real structure–preserving embedding and using
\[
\bigl(\|\mathcal{A}^R\|_{F}^{\mathbb{R}}\bigr)^2
=
4\bigl(\|\mathcal{A}\|_{F}^{\mathbb{Q}}\bigr)^2,
\]
the above minimization is equivalent (up to a positive constant factor) to its real-domain counterpart. The first-order optimality condition directly yields the closed-form solution
\[
\mathcal{X}^{k+1}
=
\frac{
\alpha \mathcal{W}^k + \rho_1 \mathcal{X}^k
}{\alpha + \rho_1},
\quad
\text{on } \Omega^c,
\]
where $\mathcal{W}^k=\mathcal{Z}^k \times_3 (T^k)^T$.
Combining both regions $\Omega$ and $\Omega^c$ gives \eqref{eq:X_update}.
\end{proof}
\noindent
In the real structure–preserving embedding, the update can be equivalently written as
\begin{equation}
\mathcal{X}^{R,k+1}
=
\mathcal{P}_\Omega(\mathcal{M}^R)
+
\mathcal{P}_{\Omega^c}
\left(
\frac{
\alpha\, \mathcal{Z}^{R,k} \times_3 (T^k)^T
+
\rho_1 \mathcal{X}^{R,k}
}{\alpha + \rho_1}
\right),
\label{eq:X_update_real_correct}
\end{equation}
where all operations are performed in
$\mathbb{R}^{4n_1 \times 4n_2 \times n_3}$.

\subsection{Updating the $\mathcal{Y}$ subproblem via real structure}

At iteration $k$, with $\mathcal{X}^{k+1}$, $\mathcal{Z}^k$, and $T^k$ fixed, the
$\mathcal{Y}$-subproblem in the proximal alternating minimization framework is given by
\begin{equation}
\min_{\mathcal{Y}}
\;
\sum_{i=1}^{r} \|\mathcal{Y}^{(i)}\|_*^{\mathbb{Q}}
+
\frac{\beta}{2}\|\mathcal{Y}-\Phi(\mathcal{Z}^k)\|_F^2
+
\frac{\rho_2}{2}\|\mathcal{Y}-\mathcal{Y}^k\|_F^2.
\label{eq:Y_sub_original}
\end{equation}

\noindent The above objective consists of a nonsmooth quaternion nuclear norm regularization
term and two quadratic penalty terms. Owing to the separability of the Frobenius norm
along the third mode, problem~\eqref{eq:Y_sub_original} can be equivalently written as
a sum of independent slice-wise subproblems:
\begin{equation}
\min_{\mathcal{Y}}
\sum_{i=1}^{r}
\left(
\|\mathcal{Y}^{(i)}\|_*^{\mathbb{Q}}
+
\frac{\beta}{2}\|\mathcal{Y}^{(i)}-\Phi(\mathcal{Z}^{(i),k})\|_F^2
+
\frac{\rho_2}{2}\|\mathcal{Y}^{(i)}-\mathcal{Y}^{(i),k}\|_F^2
\right).
\label{eq:Y_sub_slice}
\end{equation}
\noindent
Direct manipulation of trace expressions is, in general, not valid in the quaternion
domain due to the noncommutativity of quaternion multiplication. To obtain a rigorous
and trace-consistent derivation, we therefore map the problem to the real
representation, where all quantities become real-valued matrices and standard matrix
trace identities hold.\\ 
Let $Y_i = (\mathcal{Y}^{(i)})^R,\quad
Z_i = \Phi(\mathcal{Z}^{(i),k})^R,\quad
Y_i^k = (\mathcal{Y}^{(i),k})^R,$ where $(\cdot)^R$ denotes the real representation of quaternion
matrices. Under this representation, the Frobenius norm is preserved and admits the
trace form $\|A\|_F^2 = \mathrm{Tr}(A^T A).$ Using Theorem~\ref{thm:nuclear_norm_correct}, the quaternion nuclear norm satisfies
\[
\|\mathcal{Y}^{(i)}\|_*^{\mathbb{Q}}
=
\frac{1}{4}\|Y_i\|_*^{\mathbb{R}},
\quad
Y_i = (\mathcal{Y}^{(i)})^R.
\]
Consequently, problem~\eqref{eq:Y_sub_slice} can be equivalently reformulated in the
real domain as
\begin{align}
\min_{Y_i}
\sum_{i=1}^{r}
\Bigg[
\frac{1}{4}\|Y_i\|_*^{\mathbb{R}}
+
\mathrm{Tr}\!\left(
\frac{\beta}{2}(Y_i-Z_i)^T(Y_i-Z_i)
\right)
+
\mathrm{Tr}\!\left(
\frac{\rho_2}{2}(Y_i-Y_i^k)^T(Y_i-Y_i^k)
\right)
\Bigg].
\label{eq:Y_trace_1}
\end{align}

\medskip
\noindent Expanding the trace terms in~\eqref{eq:Y_trace_1} and collecting terms depending on
$Y_i$ yields
\begin{align}
\min_{Y_i}
\sum_{i=1}^{r}
\Bigg[
\frac{1}{4}\|Y_i\|_*^{\mathbb{R}}
&+
\mathrm{Tr}\!\left(
\frac{\beta+\rho_2}{2}Y_i^T Y_i
-
\beta Y_i^T Z_i
-
\rho_2 Y_i^T Y_i^k
\right)
\Bigg]
+ \text{const},
\label{eq:Y_trace_expand}
\end{align}
where ``const'' denotes terms independent of $Y_i$. Completing the square with respect to $Y_i$ leads to
\begin{align}
\min_{Y_i}
\sum_{i=1}^{r}
\Bigg[
\frac{1}{4}\|Y_i\|_*^{\mathbb{R}}
+
\mathrm{Tr}\!\left(
\frac{\beta+\rho_2}{2}
\left(
Y_i - H_i^k
\right)^T
\left(
Y_i - H_i^k
\right)
\right)
\Bigg],
\label{eq:Y_trace_square}
\end{align}
where the proximal center is defined as $H_i^k
=
\frac{\beta Z_i + \rho_2 Y_i^k}{\beta+\rho_2}.$ Dropping constant terms independent of $Y_i$, the above problem reduces to the
following proximal nuclear norm minimization:
\begin{equation}
\min_{Y_i}
\;
\frac{1}{4}\|Y_i\|_*^{\mathbb{R}}
+
\frac{\beta+\rho_2}{2}
\|Y_i - H_i^k\|_F^2,
\quad i=1,\ldots,r.
\label{eq:Y_svt_real}
\end{equation}

\medskip
Problem~\eqref{eq:Y_svt_real} is the proximal operator of the nuclear norm and admits
a closed-form solution via singular value thresholding, leading to the following
result.

\begin{theorem}[Quaternion Tensor Singular Value Thresholding]
\label{thm:QTSVT}

Let $\mathcal{H} \in \mathbb{Q}^{n_1 \times n_2 \times r}$ be a third-order quaternion tensor, and let $\mathcal{H}^{(i)} = \mathcal{H}(:,:,i)$ denote its $i$-th frontal slice. Consider the optimization problem
\begin{equation}
\min_{\mathcal{Y} \in \mathbb{Q}^{n_1 \times n_2 \times r}}
\;
\sum_{i=1}^{r} \|\mathcal{Y}^{(i)}\|_*^{\mathbb{Q}}
+
\frac{\beta+\rho_2}{2}
\|\mathcal{Y}-\mathcal{H}\|_F^2.
\label{eq:QTSVT_tensor}
\end{equation}

\noindent Then problem~\eqref{eq:QTSVT_tensor} admits a unique minimizer
$\mathcal{Y}^* \in \mathbb{Q}^{n_1 \times n_2 \times r}$, whose frontal slices are given by
\begin{equation}
\mathcal{Y}^{*(i)}
=
\mathcal{S}_{\tau}^{\mathbb{Q}}\!\left(\mathcal{H}^{(i)}\right),
\qquad
\tau = \frac{1}{\beta+\rho_2},
\quad i=1,\ldots,r,
\label{eq:QTSVT_slice}
\end{equation}
where $\mathcal{S}_{\tau}^{\mathbb{Q}}(\cdot)$ denotes the quaternion singular value thresholding operator.

Equivalently, in the real structure–preserving representation, the solution satisfies
\begin{equation}
\bigl(\mathcal{Y}^{*(i)}\bigr)^R
=
\mathcal{S}_{\frac{\tau}{4}}^{\mathbb{R}}
\!\left(
(\mathcal{H}^{(i)})^R
\right),
\quad i=1,\ldots,r,
\label{eq:QTSVT_real}
\end{equation}
where $(\cdot)^R \in \mathbb{R}^{4n_1 \times 4n_2}$ denotes the full real embedding and
$\mathcal{S}_{\lambda}^{\mathbb{R}}(\cdot)$ is the standard real-valued singular value thresholding operator.
\end{theorem}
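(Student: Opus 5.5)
The plan is to use the slice-wise separability of \eqref{eq:QTSVT_tensor}, prove uniqueness by strong convexity, identify each slice problem as the proximal map of the quaternion nuclear norm, and only then transfer to the real embedding through Theorem~\ref{thm:nuclear_norm_correct}.

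\emph{Separation and uniqueness.} Since $\|\mathcal{Y}-\mathcal{H}\|_F^2=\sum_{i=1}^r\|\mathcal{Y}^{(i)}-\mathcal{H}^{(i)}\|_F^2$, problem \eqref{eq:QTSVT_tensor} splits into $r$ independent problems
\[
\min_{Y\in\mathbb{Q}^{n_1\times n_2}} \|Y\|_*^{\mathbb Q}+\frac{1}{2\tau}\|Y-\mathcal{H}^{(i)}\|_F^2 ,\qquad \tau=\frac{1}{\beta+\rho_2}.
\]
View $\mathbb{Q}^{n_1\times n_2}$ as the real vector space $\mathbb{R}^{4n_1n_2}$. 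By Properties~\ref{thm:column_properties}(4), $\|\cdot\|_F^{\mathbb Q}$ is the Euclidean norm there, so the quadratic term is strongly convex. The quaternion nuclear norm is convex, because it equals $\frac14\|(\cdot)^R\|_*^{\mathbb R}$, a real norm composed with a linear map. Each slice objective is therefore strongly convex, continuous and coercive, so it has exactly one minimizer. Hence \eqref{eq:QTSVT_tensor} has a unique minimizer $\mathcal{Y}^*$.

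\emph{Identifying the minimizer as quaternion SVT.} Write $\mathcal{H}^{(i)}=U\Sigma V^H$ (QSVD) and set $\hat Y=U(\Sigma-\tau I)_+V^H$. I would show $\hat Y$ is the minimizer by one of two routes.
\begin{itemize}
\item Use the quaternion von Neumann trace inequality $\mathrm{Re}\,\mathrm{tr}(Y^H H)\le\sum_j\sigma_j(Y)\sigma_j(H)$. Expanding the Frobenius term then reduces the problem to scalar minimizations $\min_{s_j\ge0} s_j+\frac{1}{2\tau}(s_j-\sigma_j)^2$, whose solutions are $(\sigma_j-\tau)_+$.
\item Alternatively, verify the optimality condition $\mathcal{H}^{(i)}-\hat Y\in\tau\,\partial\|\hat Y\|_*^{\mathbb Q}$ using the subgradient description $UV^H+W$ with $\|W\|_2\le 1$. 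This can be pulled back from the real case via the embedding.
\end{itemize}
Either route gives \eqref{eq:QTSVT_slice}. To avoid quaternion trace manipulations, the cleanest way to prove the von Neumann step is to apply the real inequality to $Y^R,H^R$. One uses $\mathrm{tr}((Y^R)^TH^R)=4\,\mathrm{Re}\,\mathrm{tr}(Y^HH)$ together with the fourfold repetition of singular values established in the proof of Theorem~\ref{thm:nuclear_norm_correct}.

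\emph{Real form.} Embedding the QSVD gives $(\mathcal{H}^{(i)})^R=U^R\Sigma^R(V^R)^T$, where $U^R,V^R$ are orthogonal and each $\sigma_j$ appears four times in $\Sigma^R$. Real SVT with threshold $\lambda$ acts on these singular values as $\sigma_j\mapsto(\sigma_j-\lambda)_+$. So $\mathcal{S}^{\mathbb R}_\lambda((\mathcal{H}^{(i)})^R)=\big(U(\Sigma-\lambda I)_+V^H\big)^R$, which is again structure-preserving. This gives the real formula, and I must fix the threshold carefully; I expect this constant bookkeeping to be the main obstacle. If one uses $\|A^R\|_F^2=4\|A\|_F^2$, the real objective is $\frac14\|Y^R\|_*+\frac{1}{8\tau}\|Y^R-H^R\|_F^2$, which yields threshold $\tau$, not $\tau/4$. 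The stated $\tau/4$ arises instead from the reduced form \eqref{eq:Y_svt_real}, where the Frobenius term is taken unscaled in the real domain. In that case, however, the thresholds in \eqref{eq:QTSVT_slice} and \eqref{eq:QTSVT_real} do not describe the same minimizer, so the two normalizations cannot both hold as stated. I would make the Frobenius convention explicit and state the real threshold consistently with it: $\tau$ under the $\|A^R\|_F^2=4\|A\|_F^2$ convention, and $\tau/4$ only under the unscaled convention of \eqref{eq:Y_svt_real}, in which case the quaternion threshold in \eqref{eq:QTSVT_slice} must change accordingly.

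A final point is that minimizing over all real $4n_1\times4n_2$ matrices rather than structured ones loses nothing. The unrestricted real minimizer is the real SVT output, which, as just shown, is itself the embedding of a quaternion matrix, so the two problems share the same unique solution.
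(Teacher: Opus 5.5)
Your proposal is correct. On the one delicate point it is more reliable than the paper's own argument. As you suspected, \eqref{eq:QTSVT_slice} and \eqref{eq:QTSVT_real} cannot both hold. Because $(\mathcal{H}^{(i)})^R$ carries each quaternion singular value of $\mathcal{H}^{(i)}$ four times, $\mathcal{S}^{\mathbb R}_{\tau/4}\bigl((\mathcal{H}^{(i)})^R\bigr)=\bigl(\mathcal{S}^{\mathbb Q}_{\tau/4}(\mathcal{H}^{(i)})\bigr)^R$. This differs from $\bigl(\mathcal{S}^{\mathbb Q}_{\tau}(\mathcal{H}^{(i)})\bigr)^R$ whenever some singular value exceeds $\tau/4$. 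Already for $n_1=n_2=r=1$ and $\mathcal{H}=\tau$, the unique minimizer of $|y|+\frac{1}{2\tau}|y-\tau|^2$ is $y^*=0$, whereas $\mathcal{S}^{\mathbb R}_{\tau/4}(\tau I_4)=\tfrac34\tau I_4$. So the real-domain part of the statement is false as written. The corrected version your argument proves is $\bigl(\mathcal{Y}^{*(i)}\bigr)^R=\mathcal{S}^{\mathbb R}_{\tau}\bigl((\mathcal{H}^{(i)})^R\bigr)$, alongside \eqref{eq:QTSVT_slice}.

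The paper's proof reaches $\tau/4$ through two faulty steps.
\begin{enumerate}
\item It correctly reduces each slice to $\min_{Y_i} 2\|Y_i\|_*^{\mathbb R}+(\beta+\rho_2)\|Y_i-H_i\|_F^2$, but then asserts the SVT threshold $2/(\beta+\rho_2)$. The scalar optimality condition $2+2(\beta+\rho_2)(s-\sigma)=0$ gives $1/(\beta+\rho_2)=\tau$, matching your computation.
\item It then passes to $\tau/4$ by invoking the fourfold repetition of singular values. That repetition does not rescale a threshold; it is what already produced the factor $\tfrac14$ in Theorem~\ref{thm:nuclear_norm_correct}. The paper's two constants, $2/(\beta+\rho_2)$ and $1/(4(\beta+\rho_2))$, also disagree with each other.
\end{enumerate}
Your tracing of $\tau/4$ to \eqref{eq:Y_trace_1}--\eqref{eq:Y_svt_real} is correct: there the nuclear norm is scaled by $\tfrac14$ but the Frobenius terms are not. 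Since Algorithm~\ref{alg:qnttnn_real} uses \eqref{eq:QTSVT_real}, its $\mathcal{Y}$-step is actually the exact minimizer of the subproblem with $\beta,\rho_2$ replaced by $4\beta,4\rho_2$. The discrepancy is therefore not merely notational.

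Apart from the constant, the two routes differ in structure. The paper stays entirely in the embedded domain: it rescales, cites real SVT, and maps back. It leaves implicit both uniqueness over quaternion matrices and the fact that the unrestricted real minimizer lies in the range of $(\cdot)^R$. It obtains \eqref{eq:QTSVT_slice} only by back-translation.

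You instead work intrinsically. You get existence and uniqueness from strong convexity on $\mathbb{Q}^{n_1\times n_2}\cong\mathbb{R}^{4n_1n_2}$. You identify the minimizer via a quaternion von Neumann inequality, proved through $\mathrm{tr}((Y^R)^TH^R)=4\,\mathrm{Re}\,\mathrm{tr}(Y^HH)$ and the fourfold repetition. You then derive the real form as a corollary, with the structure-preservation check that makes the reduction legitimate. That ordering lets you fix the real threshold correctly rather than inherit it from an inconsistent normalization.
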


\begin{proof}

Since both the quaternion nuclear norm and the Frobenius norm are separable across frontal slices, problem~\eqref{eq:QTSVT_tensor} decomposes into $r$ independent matrix subproblems of the form
\[
\min_{\mathcal{Y}^{(i)} \in \mathbb{Q}^{n_1 \times n_2}}
\|\mathcal{Y}^{(i)}\|_*^{\mathbb{Q}}
+
\frac{\beta+\rho_2}{2}
\|\mathcal{Y}^{(i)}-\mathcal{H}^{(i)}\|_F^2.
\]

\noindent Passing to the real structure–preserving embedding, we use the identities
\[
\|\mathcal{Y}^{(i)}\|_*^{\mathbb{Q}}
=
\frac14\|(\mathcal{Y}^{(i)})^R\|_*^{\mathbb{R}},
\qquad
\|(\mathcal{Y}^{(i)}-\mathcal{H}^{(i)})^R\|_F^2
=
4\|\mathcal{Y}^{(i)}-\mathcal{H}^{(i)}\|_F^2.
\]

\noindent Therefore, each subproblem is equivalent to $\min_{Y_i}
\frac14\|Y_i\|_*^{\mathbb{R}}
+
\frac{\beta+\rho_2}{8}
\|Y_i-H_i\|_F^2,$
where $Y_i = (\mathcal{Y}^{(i)})^R, H_i = (\mathcal{H}^{(i)})^R.$  Multiplying the objective by $8$ (which does not change the minimizer) gives
\[
\min_{Y_i}
2\|Y_i\|_*^{\mathbb{R}}
+
(\beta+\rho_2)
\|Y_i-H_i\|_F^2.
\]
\noindent By the classical singular value thresholding result~\cite{cai2010svt},
the unique minimizer is obtained by soft-thresholding the singular values of $H_i$
with threshold
\[
\tau_R = \frac{2}{\beta+\rho_2}.
\]

\noindent Since quaternion singular values correspond to four identical real singular values in the real embedding, this is equivalent to thresholding in the real domain with parameter
\[
\frac{\tau}{4}
=
\frac{1}{4(\beta+\rho_2)}.
\]

Finally, mapping the solution back via the inverse real structure–preserving embedding recovers the quaternion solution in \eqref{eq:QTSVT_slice}.
\end{proof}

\subsection{Update of $\mathcal{Z}$}

In this subsection, we derive the update of $\mathcal{Z}$ under the proposed PAM framework.
To ensure theoretical correctness and computational tractability in the quaternion setting,
we adopt a \emph{real-structure–preserving} strategy, whereby quaternion-valued tensors are
optimized through their  real representations.

\subsubsection*{Subproblem Formulation}

With $\mathcal{X}^{k+1}$, $\mathcal{Y}^{k}$, and $T^k$ fixed, the
$\mathcal{Z}$-subproblem in the quaternion domain is
\begin{equation}
\min_{\mathcal{Z}}
\;
\frac{\alpha}{2}
\bigl\|
\mathcal{X}^{k+1}
-
\mathcal{Z} \times_3 (T^k)^T
\bigr\|_{F}^{\mathbb{Q}\,2}
+
\frac{\beta}{2}
\|\mathcal{Y}^{k} - \Phi(\mathcal{Z})\|_{F}^{\mathbb{Q}\,2}
+
\frac{\rho_3}{2}
\|\mathcal{Z} - \mathcal{Z}^{k}\|_{F}^{\mathbb{Q}\,2}.
\label{eq:Z_quat_subproblem}
\end{equation}

Passing to the full real structure–preserving embedding and using the Frobenius norm relation, we get
\[
(\|\mathcal{A}^R\|_{F}^{\mathbb{R}})^2
=
4(\|\mathcal{A}\|_{F}^{\mathbb{Q}})^2.
\]
 The problem~\eqref{eq:Z_quat_subproblem} is equivalently written in the real domain as
\begin{equation}
\min_{\mathcal{Z}^R}
\;
\frac{\alpha}{8}
\bigl(\|
\mathcal{X}^{R,k+1}
-
\mathcal{Z}^R \times_3 (T^k)^T
\bigr\|_{F}^{\mathbb{R}})^2
+
\frac{\beta}{8}
(\|
\mathcal{Y}^{R,k} - \phi(\mathcal{Z}^R)
\|_{F}^{\mathbb{R}})^2
+
\frac{\rho_3}{8}
(\|\mathcal{Z}^R - \mathcal{Z}^{R,k}\|_{F}^{\mathbb{R}})^2.
\label{eq:Z_real_subproblem_correct}
\end{equation}
\subsubsection*{Parseval-Type Simplification}

Since $T^k \in \mathbb{R}^{r \times n_3}$ is semi-orthogonal and satisfies
$T^k (T^k)^T = I_r$, the real-domain Parseval identity yields
\begin{equation}
\bigl(\|
\mathcal{X}^{R,k+1}
-
\mathcal{Z}^R \times_3 (T^k)^T
\|_{F}^{\mathbb{R}}\bigr)^2
=
\bigl(\|
\mathcal{X}^{R,k+1} \times_3 T^k
-
\mathcal{Z}^R
\|_{F}^{\mathbb{R}}\bigr)^2 .
\end{equation}

\noindent Substituting this relation into \eqref{eq:Z_real_subproblem_correct} and completing the square gives
\begin{align}
\min_{\mathcal{Z}^R}
\;&
\frac{\alpha+\rho_3}{8}
\bigl(\|
\mathcal{Z}^R - \mathcal{G}^{R,k}
\|_{F}^{\mathbb{R}}\bigr)^2
+
\frac{\beta}{8}
\bigl(\|
\mathcal{Y}^{R,k} - \phi(\mathcal{Z}^R)
\|_{F}^{\mathbb{R}}\bigr)^2
+ \mathrm{const},
\label{eq:Z_real_simplified_correct}
\end{align}
where the proximal center is defined as
\begin{equation}
\mathcal{G}^{R,k}
=
\frac{
\alpha(\mathcal{X}^{R,k+1} \times_3 T^k)
+
\rho_3 \mathcal{Z}^{R,k}
}{\alpha+\rho_3}.
\label{eq:G_real_center_correct}
\end{equation}
\subsubsection*{Element-Wise Decoupling}

Both the Frobenius norm and the nonlinearity $\phi(\cdot)$ are separable in the
real structure–preserving representation. Consequently, problem
\eqref{eq:Z_real_simplified_correct} decomposes into a collection of independent scalar
optimization problems. For each entry $(p,q,\ell)$, let
\[
z = (\mathcal{Z}^R)_{p q \ell}, \quad
g = (\mathcal{G}^{R,k})_{p q \ell}, \quad
y = (\mathcal{Y}^{R,k})_{p q \ell}.
\]
Then the update reduces to
\begin{equation}
z^{k+1}
=
\arg\min_{z \in \mathbb{R}}
\;
\frac{\alpha+\rho_3}{8}(z - g)^2
+
\frac{\beta}{8}(\phi(z) - y)^2.
\label{eq:Z_scalar_sub_correct}
\end{equation}

This scalar nonlinear minimization problem is identical in form to the update equation
derived in the real third-order tensor case. The constant factor $\tfrac{1}{8}$ does not
affect the location of the minimizer, and the problem can therefore be efficiently
solved using Newton's method.
\subsubsection*{Newton Update}

The first-order optimality condition of \eqref{eq:Z_scalar_sub_correct} is
\[
\frac{\alpha+\rho_3}{4}(z - g)
+
\frac{\beta}{4} (\phi(z) - y)\phi'(z) = 0.
\]

\noindent Multiplying both sides by $4$, this is equivalently written as
\[
(\alpha+\rho_3)(z - g)
+
\beta (\phi(z) - y)\phi'(z) = 0.
\]

\noindent Newton's iteration for solving \eqref{eq:Z_scalar_sub_correct} is therefore given by
\begin{equation}
\label{eq:Z_update_correct}
z^{(t+1)}
=
z^{(t)}
-
\frac{
(\alpha+\rho_3)(z^{(t)} - g)
+
\beta (\phi(z^{(t)}) - y)\phi'(z^{(t)})
}{
(\alpha+\rho_3)
+
\beta (\phi'(z^{(t)}))^2
}.
\end{equation}

Applying the above update independently to all entries yields
$\mathcal{Z}^{R,k+1}$, which is subsequently mapped back to the quaternion tensor
$\mathcal{Z}^{k+1}$ via the inverse real structure–preserving embedding.

\subsection{Update of $T$}

The $T$-subproblem at iteration $k$ is given in the quaternion formulation by
\begin{equation}
\min_{T \in \mathbb{R}^{r \times n_3}}
\;
\frac{\alpha}{2}
\bigl(\|
\mathcal{X}^{k+1}
-
\mathcal{Z}^{k+1} \times_3 T^T
\|_{F}^{\mathbb{Q}}\bigr)^2
+
\frac{\rho_4}{2}
\bigl(\|T - T^k\|_{F}^{\mathbb{R}}\bigr)^2
\quad
\text{s.t. } TT^T = I_r.
\label{eq:T_subproblem}
\end{equation}

\noindent Passing to the full real structure–preserving embedding and using $\bigl(\|\mathcal{A}^R\|_{F}^{\mathbb{R}}\bigr)^2
=
4\bigl(\|\mathcal{A}\|_{F}^{\mathbb{Q}}\bigr)^2,$ problem~\eqref{eq:T_subproblem} is equivalently written in the real domain as
\begin{equation}
\min_{T}
\;
\frac{\alpha}{8}
\bigl(\|
\mathcal{X}^{R,k+1}
-
\mathcal{Z}^{R,k+1} \times_3 T^T
\|_{F}^{\mathbb{R}}\bigr)^2
+
\frac{\rho_4}{2}
\bigl(\|T - T^k\|_{F}^{\mathbb{R}}\bigr)^2
\quad
\text{s.t. } TT^T = I_r.
\label{eq:T_real_subproblem}
\end{equation}
\noindent
Using mode-3 unfolding in the real representation,
\[
X_{(3)}^R = (\mathcal{X}^{R,k+1})_{(3)}, 
\qquad
Z_{(3)}^R = (\mathcal{Z}^{R,k+1})_{(3)},
\]
the first quadratic term satisfies
\[
\bigl(\|
\mathcal{X}^{R,k+1}
-
\mathcal{Z}^{R,k+1} \times_3 T^T
\|_{F}^{\mathbb{R}}\bigr)^2
=
\bigl(\|X_{(3)}^R - T^T Z_{(3)}^R\|_{F}^{\mathbb{R}}\bigr)^2.
\]

\noindent Expanding the squared Frobenius norm yields
\begin{align*}
\bigl(\|X_{(3)}^R - T^T Z_{(3)}^R\|_{F}^{\mathbb{R}}\bigr)^2
&=
\mathrm{Tr}\!\left((X_{(3)}^R)^T X_{(3)}^R\right)
+
\mathrm{Tr}\!\left(Z_{(3)}^R Z_{(3)}^{R\,T}\right)
\\
&\quad
-
2\,\mathrm{Tr}\!\left(T Z_{(3)}^R (X_{(3)}^R)^T\right).
\end{align*}

\noindent Discarding constant terms independent of $T$, the objective reduces to
\begin{align}\label{eq:Tobjective}
\min_{T}
\;&
-\frac{\alpha}{4}
\mathrm{Tr}\!\left(T Z_{(3)}^R (X_{(3)}^R)^T\right)
+
\frac{\rho_4}{2}
\bigl(\|T - T^k\|_{F}^{\mathbb{R}}\bigr)^2
+
\Xi(T)
\nonumber\\
=\;&
-\mathrm{Tr}\!\left(
T\Bigl(
\tfrac{\alpha}{4} Z_{(3)}^R (X_{(3)}^R)^T
+
\rho_4 T^k
\Bigr)
\right)
+
\Xi(T).
\end{align}

Define $\mathcal{D}^{k+1}
=
\tfrac{\alpha}{4}
Z_{(3)}^R (X_{(3)}^R)^T
+
\rho_4 T^k$
and then the problem~\eqref{eq:Tobjective} becomes the orthogonal Procrustes problem
\[
\max_{T \in \mathbb{R}^{r \times n_3}}
\mathrm{Tr}(T \mathcal{D}^{k+1})
\quad
\text{s.t. } TT^T = I_r.
\]

Let the singular value decomposition of $\mathcal{D}^{k+1}$ be $\mathcal{D}^{k+1} = U \Sigma V^T.$
By von Neumann’s trace inequality, the maximizer is obtained as
\begin{equation}\label{eq:updateTk}
    T^{k+1} = U V^T.
\end{equation}

This yields the closed-form update of $T$ under the real structure–preserving framework.
We now summarize the proposed quaternion tensor completion method for solving the
real-domain optimization problem \eqref{eq:qnttnn_relax_real_correct} under the
real-structure–preserving PAM framework. All variables are updated in their
 real representations, while the final solution is recovered in the
quaternion domain.
\subsection{Computational Complexity}
We analyze the per-iteration computational complexity of the proposed
real-structure--preserving quaternion tensor completion algorithm.
The update of $\mathcal{X}$ mainly involves a mode-3 tensor--matrix
multiplication $\mathcal{Z}^{R} \times_3 T^T$ followed by element-wise
operations. Since $\mathcal{Z}^{R} \in \mathbb{R}^{4n_1 \times 4n_2 \times r}$,
this multiplication requires approximately
\[
16 r n_1 n_2 n_3 + O(n_1 n_2 n_3)
\]
floating-point operations (flops).

The update of $\mathcal{Y}$ is dominated by performing $r$ singular value
decompositions on real matrices of size $4n_1 \times 4n_2$.
Using standard SVD complexity results, this step requires
\[
512 r n_1 n_2^2
+ \frac{256}{3} r n_2^3
+ O(r n_2^2)
\]
flops.

The update of $\mathcal{Z}$ is carried out via element-wise nonlinear
optimization using Gauss--Newton iterations in the real domain.
Since $\mathcal{Z}^{R} \in \mathbb{R}^{4n_1 \times 4n_2 \times r}$,
assuming $I$ Newton iterations, this step requires approximately
\[
16(2n_3 + 27I + 2)\, r n_1 n_2 + r n_3 + 1
\]
flops.

\begin{algorithm}[htbp]
\caption{Quaternion Tensor Completion via QNTTNN (Real-Structure--Preserving PAM)}
\label{alg:qnttnn_real}
\begin{algorithmic}[1]

\REQUIRE Observed tensor $\mathcal{M} \in \mathbb{Q}^{n_1 \times n_2 \times n_3}$,
sampling index set $\Omega$,
parameters $\alpha,\beta,\rho_1,\rho_2,\rho_3,\rho_4 > 0$,
transform row dimension $r \le n_3$,
tolerance $\varepsilon > 0$

\ENSURE Recovered tensor $\mathcal{X}^* \in \mathbb{Q}^{n_1 \times n_2 \times n_3}$

\STATE \textbf{Initialization:}
\STATE Initialize $T^{0} \in \mathbb{R}^{r \times n_3}$ such that $T^{0}(T^{0})^{T} = I_r$
\STATE Initialize $\mathcal{X}^{0} = \mathcal{P}_{\Omega}(\mathcal{M})$
\STATE Set $\mathcal{Z}^{0} = \mathcal{X}^{0} \times_{3} T^{0}$
\STATE Set $\mathcal{Y}^{0} = \Phi(\mathcal{Z}^{0})$
\STATE Set $k = 0$

\REPEAT
    \STATE Update $\mathcal{X}^{R,k+1}$ via~\eqref{eq:X_update_real_correct}
    \STATE Update $\mathcal{Y}^{R,k+1}$ via~\eqref{eq:QTSVT_real}
    \STATE Update $\mathcal{Z}^{R,k+1}$ via~\eqref{eq:Z_update_correct}
    \STATE Update $T^{k+1}$ via~\eqref{eq:updateTk}
    \STATE $k \leftarrow k + 1$
\UNTIL{
$\displaystyle
\frac{\|\mathcal{X}^{R,k} - \mathcal{X}^{R,k-1}\|_{F}^{\mathbb{R}}}
     {\|\mathcal{X}^{R,k-1}\|_{F}^{\mathbb{R}}}
< \varepsilon
$
}

\STATE Recover $\mathcal{X}^*$ via inverse real structure--preserving embedding
\STATE \textbf{Output:} $\mathcal{X}^*$

\end{algorithmic}
\end{algorithm}

The update of $T$ consists of computing the matrix
$Z_{(3)}^R (X_{(3)}^R)^T \in \mathbb{R}^{r \times n_3}$,
where $Z_{(3)}^R \in \mathbb{R}^{r \times 16 n_1 n_2}$ and
$X_{(3)}^R \in \mathbb{R}^{n_3 \times 16 n_1 n_2}$.
This multiplication costs approximately
$16 r n_1 n_2 n_3
$ flops, followed by an SVD of an $r \times n_3$ matrix,
which requires $\frac{4}{3} n_3^3 + O(r^2 n_3)$
flops.

In summary, each iteration of the proposed PAM-based quaternion algorithm
requires approximately
\begin{align*}
512 r n_1 n_2^2
+ \frac{256}{3} r n_2^3
+ \frac{4}{3} n_3^3
+ 32 r n_1 n_2 n_3 
+ 16(2n_3 + 27I + 2)\, r n_1 n_2
+ O(r n_2^2)
+ O(r^2 n_3).
\end{align*}
\section{Convergence Analysis}

In this section, we establish the global convergence of Algorithm~\ref{alg:qnttnn_real} using the  real representation framework. Our analysis follows the proximal alternating minimization (PAM) convergence theory based on the Kurdyka-Łojasiewicz (K-Ł) property~\cite{attouch2013convergence,bolte2014proximal}. The key insight is that all quaternion operations can be equivalently performed in the real domain, allowing us to apply standard real-valued optimization theory while preserving all quaternion algebraic properties.


We first establish that our quaternion optimization problem can be equivalently formulated in the real domain.

\begin{theorem}[Real Reformulation of QNTTNN]
\label{thm:real_equivalence}
The quaternion optimization problem \eqref{eq:qnttnn_relax}:
\[
\min_{\mathcal{X}, \mathcal{Y}, \mathcal{Z}, T}
f^{\mathbb{Q}}(\mathcal{X}, \mathcal{Y}, \mathcal{Z}, T)
\]
is equivalent to the real optimization problem: $\min_{\mathcal{X}^R, \mathcal{Y}^R, \mathcal{Z}^R, T}
f^{\mathbb{R}}(\mathcal{X}^R, \mathcal{Y}^R, \mathcal{Z}^R, T),$
where $\mathcal{X}^R = (\mathcal{X})^R$, 
$\mathcal{Y}^R = (\mathcal{Y})^R$, 
$\mathcal{Z}^R = (\mathcal{Z})^R$, and
\begin{align*}
f^{\mathbb{R}}(\mathcal{X}^R, \mathcal{Y}^R, \mathcal{Z}^R, T)
&=
\frac{1}{4}\sum_{i=1}^{r}
\|(\mathcal{Y}^{R})_i\|_*^{\mathbb{R}}
+
\frac{\alpha}{8}
\bigl(\|\mathcal{X}^R - \mathcal{Z}^R \times_3 T^T\|_{F}^{\mathbb{R}}\bigr)^2
\\
&\quad
+
\frac{\beta}{8}
\bigl(\|\mathcal{Y}^R - \phi(\mathcal{Z}^R)\|_{F}^{\mathbb{R}}\bigr)^2
+
\Xi^{R}(\mathcal{X}^R)
+
\Xi(T),
\end{align*}
with $\mathcal{X}^R \in \mathbb{R}^{4n_1 \times 4n_2 \times n_3}$,
$\mathcal{Y}^R \in \mathbb{R}^{4n_1 \times 4n_2 \times r}$,
$\mathcal{Z}^R \in \mathbb{R}^{4n_1 \times 4n_2 \times r}$,
and $T \in \mathbb{R}^{r \times n_3}$.
\end{theorem}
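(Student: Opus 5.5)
The plan is to show that the map $\mathcal{R}:(\mathcal{X},\mathcal{Y},\mathcal{Z},T)\mapsto(\mathcal{X}^R,\mathcal{Y}^R,\mathcal{Z}^R,T)$ is a bijection onto its image, which is the linear subspace of structured real tensors. On that image we will prove the pointwise identity $f^{\mathbb{Q}}=f^{\mathbb{R}}\circ\mathcal{R}$. Minimizers, optimal values and level sets then correspond one-to-one, which is the sense of ``equivalent'' we will adopt.

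First, injectivity and linearity of $\mathcal{R}$. Each frontal slice of $\mathcal{X}^R$ is the block matrix of Definition~\ref{def:tensor_real_embedding}, and its first block column recovers $(\mathcal{X}_s^{(k)},\mathcal{X}_x^{(k)},\mathcal{X}_y^{(k)},\mathcal{X}_z^{(k)})$. The embedding is linear by Properties~\ref{thm:column_properties}(1), applied slice-wise. Moreover, $T$ is untouched by the embedding, so $\Xi(T)$ appears identically on both sides.

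Second, we match the objective term by term.
\begin{enumerate}
\item The nuclear-norm term: $\sum_i\|\mathcal{Y}^{(i)}\|_*^{\mathbb{Q}}=\frac14\sum_i\|(\mathcal{Y}^R)_i\|_*^{\mathbb{R}}$ follows directly from Theorem~\ref{thm:nuclear_norm_correct}.
\item The coupling term: since $T$ is real, the commutation $(\mathcal{Z}\times_3T^T)^R=\mathcal{Z}^R\times_3T^T$ from Definition~\ref{def:mode3} holds. Linearity then gives $(\mathcal{X}-\mathcal{Z}\times_3T^T)^R=\mathcal{X}^R-\mathcal{Z}^R\times_3T^T$. The identity $\|\mathcal{A}^R\|_F^2=4\|\mathcal{A}\|_F^2$ turns $\frac{\alpha}{2}\|\cdot\|_F^2$ into $\frac{\alpha}{8}\|\cdot\|_F^2$.
\item The nonlinear term: by Definition~\ref{def:nonlinearity}, $\Phi(\mathcal{Z})^R=\phi(\mathcal{Z}^R)$, so $(\mathcal{Y}-\Phi(\mathcal{Z}))^R=\mathcal{Y}^R-\phi(\mathcal{Z}^R)$. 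The same factor of $4$ gives the coefficient $\beta/8$.
\item The indicator: $\mathcal{P}_\Omega(\mathcal{X})=\mathcal{P}_\Omega(\mathcal{M})$ holds if and only if the four component tensors agree on $\Omega$. We interpret $\Omega$ in the real domain as the induced index set over all blocks, and this is exactly $\mathcal{P}_\Omega(\mathcal{X}^R)=\mathcal{P}_\Omega(\mathcal{M}^R)$, so $\Xi(\mathcal{X})=\Xi^R(\mathcal{X}^R)$.
\end{enumerate}
Summing the four terms gives $f^{\mathbb{Q}}=f^{\mathbb{R}}\circ\mathcal{R}$.

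Finally, we transfer optimality. If $(\mathcal{X}^\star,\dots,T^\star)$ minimizes $f^{\mathbb{Q}}$, its image minimizes $f^{\mathbb{R}}$ over the structured set, and conversely via $\mathcal{R}^{-1}$. Because $\mathcal{R}$ is a linear isometry up to the scale $2$, neighborhoods are preserved and local minimizers correspond as well.

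The main obstacle is the nonlinear term. Element-wise $\phi$ applied to the block matrix $\mathcal{Z}^R$ must itself be the real representation of some quaternion tensor, or else $\Phi(\mathcal{Z})$ is ill-defined. This works for odd activations such as $\tanh$, because $\phi(-a)=-\phi(a)$ preserves the sign pattern of the blocks. It fails for sigmoid, where the negative blocks are not mapped consistently. We would therefore either restrict the statement to odd $\phi$, or state the equivalence with minimization over the structured subspace of real tensors. Definition~\ref{def:nonlinearity} then simply defines $\Phi$ through that subspace.
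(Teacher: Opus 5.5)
Your proposal is correct and follows the same route as the paper. Each term is matched via $\|\mathcal{A}^R\|_F^2=4\|\mathcal{A}\|_F^2$, the factor $\tfrac14$ from Theorem~\ref{thm:nuclear_norm_correct}, and the commutation of the real mode-3 product with the embedding, and then bijectivity of the embedding onto its image transfers minimizers.

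Your extra caveats are sound and go beyond what the paper's proof addresses. The real problem must be read as minimization over the structured image. Also, $\Phi(\mathcal{Z})^R=\phi(\mathcal{Z}^R)$ defines a quaternion tensor only for odd $\phi$ such as $\tanh$, not for sigmoid or ReLU.
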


\begin{proof}
Under the full real structure–preserving embedding, the Frobenius norm satisfies $\bigl(\|\mathcal{A}^R\|_{F}^{\mathbb{R}}\bigr)^2
=
4\bigl(\|\mathcal{A}\|_{F}^{\mathbb{Q}}\bigr)^2,$ and the quaternion nuclear norm satisfies $\|\mathcal{Y}_i\|_*^{\mathbb{Q}}
=
\frac14 \|(\mathcal{Y}^{R})_i\|_*^{\mathbb{R}}.$
Moreover, since $T$ is real-valued, the mode-3 product commutes with the real embedding:
\[
(\mathcal{X} \times_3 T)^R
=
\mathcal{X}^R \times_3 T.
\]
Therefore, each term in $f^{\mathbb{Q}}$ corresponds (up to positive constant scaling factors) to its real-domain counterpart in $f^{\mathbb{R}}$.
Since the real embedding is bijective, minimizing $f^{\mathbb{Q}}$ over quaternion variables is equivalent to minimizing $f^{\mathbb{R}}$ over their real embeddings.
\end{proof}

\begin{remark}
Throughout this section, we denote
$\Theta^k = (\mathcal{X}^k, \mathcal{Y}^k, \mathcal{Z}^k, T^k)$
for quaternion variables and
$\Theta^{R,k} = (\mathcal{X}^{R,k}, \mathcal{Y}^{R,k}, \mathcal{Z}^{R,k}, T^k)$
for their real structure–preserving representations,
with the understanding that $T^k$ is already real-valued.
\end{remark}


We now establish that each PAM iteration decreases the objective function while controlling the step size, which is the first key ingredient for convergence.

\begin{lemma}[Sufficient Decrease]
\label{lem:sufficient_decrease}
Let $\Theta^k = (\mathcal{X}^k, \mathcal{Y}^k, \mathcal{Z}^k, T^k)$ 
be the sequence generated by Algorithm~\ref{alg:qnttnn_real}. Then:
\[
f^{\mathbb{Q}}(\Theta^{k+1})
+
\rho \|\Theta^{k+1} - \Theta^k\|_{F}^{\mathbb{Q}\,2}
\leq
f^{\mathbb{Q}}(\Theta^k),
\text{~~where $\rho = \frac{1}{2}\min\{\rho_1, \rho_2, \rho_3, \rho_4\}$.}
\]
\end{lemma}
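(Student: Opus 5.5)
The plan is the standard telescoping argument for proximal alternating minimization, applied to the four block updates in \eqref{eq:pam_updates}. We work with the full objective $f^{\mathbb{Q}} = \mathcal{L} + \Xi(\mathcal{X}) + \Xi(T)$. By Theorem~\ref{thm:real_equivalence} we may equally argue with $f^{\mathbb{R}}$; the Frobenius identities only change constant factors. The iterates are feasible for $k\ge 1$: Theorem~\ref{thm:X_update} keeps $\mathcal{P}_\Omega(\mathcal{X}^{k})=\mathcal{P}_\Omega(\mathcal{M})$, and \eqref{eq:updateTk} keeps $T^kT^{k\,T}=I_r$. Hence the indicator terms vanish along the sequence and $f^{\mathbb{Q}}(\Theta^k)$ is finite.

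For each block we use the defining minimality of the update, comparing its value with the previous iterate as a competitor. For the $\mathcal{X}$-step, since $\mathcal{X}^{k+1}$ minimizes $\mathcal{L}(\cdot,\mathcal{Y}^k,\mathcal{Z}^k,T^k)+\Xi+\frac{\rho_1}{2}\|\cdot-\mathcal{X}^k\|_F^2$ and $\mathcal{X}^k$ is feasible,
\[
f^{\mathbb{Q}}(\mathcal{X}^{k+1},\mathcal{Y}^k,\mathcal{Z}^k,T^k)+\tfrac{\rho_1}{2}\|\mathcal{X}^{k+1}-\mathcal{X}^k\|_F^2\le f^{\mathbb{Q}}(\mathcal{X}^k,\mathcal{Y}^k,\mathcal{Z}^k,T^k).
\]
The $\mathcal{Y}$-step is the exact global minimizer by Theorem~\ref{thm:QTSVT}, and the $T$-step is the exact global Procrustes solution over the constraint set. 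This gives the analogous inequalities with $\rho_2$ and $\rho_4$.

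Chaining the four inequalities telescopes the intermediate mixed-point values. We obtain
\[
f^{\mathbb{Q}}(\Theta^{k+1})+\tfrac{1}{2}\bigl(\rho_1\|\Delta\mathcal{X}\|^2+\rho_2\|\Delta\mathcal{Y}\|^2+\rho_3\|\Delta\mathcal{Z}\|^2+\rho_4\|\Delta T\|^2\bigr)\le f^{\mathbb{Q}}(\Theta^k).
\]
Bounding each $\rho_i/2$ below by $\rho=\frac12\min_i\rho_i$ and writing $\|\Theta^{k+1}-\Theta^k\|^2$ as the sum of the block squared norms yields the claim.

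The main obstacle is the $\mathcal{Z}$-step. Its subproblem \eqref{eq:Z_scalar_sub_correct} is nonconvex through $\phi$ and is solved only approximately by Newton's method \eqref{eq:Z_update_correct}, so "$\in\arg\min$" cannot be invoked directly. I would first show that each scalar problem has a global minimizer. Take $\phi$ smooth and bounded (tanh or sigmoid); then the objective is coercive, since the quadratic term dominates. I would then assume, as in the real NTTNN analysis~\cite{benzheng2022nonlineartrans}, that the Newton output attains a value no larger than the value at $z^k$, i.e.\ it is a descent-safeguarded inexact minimizer. This is sufficient, because the proximal inequality only needs the new value to be at most the value at the old iterate. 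If needed, I would enforce this by a backtracking or acceptance test that falls back to $z^k$. A secondary point to check is block-norm bookkeeping: in real form the proximal terms carry a factor $\frac18$ relative to $\|\cdot\|^{\mathbb{R}}$, which must convert back to the quaternion norms with coefficient $\rho_i/2$. This works out via $\|\mathcal{A}^R\|_F^2=4\|\mathcal{A}\|_F^2$, provided one stays consistently in the quaternion norm, where $T$ has its ordinary real norm.
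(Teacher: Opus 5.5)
Your proposal is correct and follows the paper's proof. Both use the same four block-wise proximal inequalities, each update compared against the previous iterate, chained telescopically and converted via $\|\mathcal{A}^R\|_F^2=4\|\mathcal{A}\|_F^2$ so that each block carries coefficient $\rho_i/2$ in quaternion norms, giving $\rho=\frac12\min_i\rho_i$. The one difference is on the $\mathcal{Z}$-step: the paper simply asserts that the nonconvex subproblem is ``minimized exactly'' by Newton's method, whereas you correctly note that the telescoping only needs the proximal $\mathcal{Z}$-objective at the new point not to exceed its value at $\mathcal{Z}^k$, and you make this an explicit safeguarded assumption with a fallback to $z^k$.
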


\begin{proof}
We analyze each update in the real structure–preserving embedding.

Denote $\Theta^{R,k} = (\mathcal{X}^{R,k}, \mathcal{Y}^{R,k}, \mathcal{Z}^{R,k}, T^k)$.

\textbf{Update $\mathcal{X}^{k+1}$:}
By optimality of the $\mathcal{X}$-subproblem, we have
\[
f^{\mathbb{R}}(\Theta^{R,k+1}|_{\mathcal{X}})
+
\frac{\rho_1}{8}
(\|\mathcal{X}^{R,k+1} - \mathcal{X}^{R,k}\|_{F}^{\mathbb{R}})^2
\leq
f^{\mathbb{R}}(\Theta^{R,k}).
\]

\textbf{Update $\mathcal{Y}^{k+1}$:}
By optimality of the real SVT step,
\[
f^{\mathbb{R}}(\Theta^{R,k+1}|_{\mathcal{Y}})
+
\frac{\rho_2}{8}
(\|\mathcal{Y}^{R,k+1} - \mathcal{Y}^{R,k}\|_{F}^{\mathbb{R}})^2
\leq
f^{\mathbb{R}}(\Theta^{R,k+1}|_{\mathcal{X}}).
\]

\textbf{Update $\mathcal{Z}^{k+1}$:}
Since the $\mathcal{Z}$-subproblem is minimized exactly,
\[
f^{\mathbb{R}}(\Theta^{R,k+1}|_{\mathcal{Z}})
+
\frac{\rho_3}{8}
(\|\mathcal{Z}^{R,k+1} - \mathcal{Z}^{R,k}\|_{F}^{\mathbb{R}})^2
\leq
f^{\mathbb{R}}(\Theta^{R,k+1}|_{\mathcal{Y}}).
\]

\textbf{Update $T^{k+1}$:}
The Procrustes step gives
\[
f^{\mathbb{R}}(\Theta^{R,k+1})
+
\frac{\rho_4}{2}
\|T^{k+1} - T^k\|_{F}^{2}
\leq
f^{\mathbb{R}}(\Theta^{R,k+1}|_{\mathcal{Z}}).
\]

Summing these four inequalities telescopically yields
\begin{align*}
f^{\mathbb{R}}(\Theta^{R,k+1})
&+
\frac{\rho_1}{8}
(\|\mathcal{X}^{R,k+1} - \mathcal{X}^{R,k}\|_{F}^{\mathbb{R}})^2
+
\frac{\rho_2}{8}
(\|\mathcal{Y}^{R,k+1} - \mathcal{Y}^{R,k}\|_{F}^{\mathbb{R}})^2
\\
&+
\frac{\rho_3}{8}
(\|\mathcal{Z}^{R,k+1} - \mathcal{Z}^{R,k}\|_{F}^{\mathbb{R}})^2
+
\frac{\rho_4}{2}
\|T^{k+1} - T^k\|_{F}^{2}
\leq
f^{\mathbb{R}}(\Theta^{R,k}).
\end{align*}

Using $(\|\mathcal{A}^R\|_{F}^{\mathbb{R}})^2
=
4(\|\mathcal{A}\|_{F}^{\mathbb{Q}})^2,$ we obtain the quaternion-domain inequality and the result follows.
\end{proof}


The second key ingredient relates the subdifferential size to the iterative progress, which is essential for applying the K-Ł inequality.

\begin{lemma}[Relative Error]
\label{lem:relative_error}
Assume the nonlinearity $\phi$ is Lipschitz continuous on bounded sets.
For any $d^{R,k+1} \in \partial f^{\mathbb{R}}(\Theta^{R,k+1})$, we have
\[
\|d^{R,k+1}\|_{F}^{\mathbb{R}}
\leq
\tau
\|\Theta^{R,k+1} - \Theta^{R,k}\|_{F}^{\mathbb{R}},
\]
where $\tau > 0$ is a constant depending on $\rho_i$ and the Lipschitz constants of the smooth parts.
\end{lemma}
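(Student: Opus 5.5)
The plan is the standard PAM relative-error argument of \cite{attouch2013convergence,bolte2014proximal}. We will not bound an arbitrary element of $\partial f^{\mathbb{R}}(\Theta^{R,k+1})$. Instead we construct one specific subgradient $d^{R,k+1}$ from the first-order optimality conditions of the four subproblems in \eqref{eq:pam_updates}, written in the real embedding, and bound that element. This is all the K-Ł machinery needs, since $\operatorname{dist}(0,\partial f^{\mathbb{R}}(\Theta^{R,k+1}))\le\|d^{R,k+1}\|$. The statement should be read this way, as an existence claim for some $d^{R,k+1}$.

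Write $f^{\mathbb{R}} = g_1(\mathcal{X}^R)+g_2(\mathcal{Y}^R)+g_3(T)+h(\mathcal{X}^R,\mathcal{Y}^R,\mathcal{Z}^R,T)$. Here $g_1=\Xi^R$, $g_2=\frac14\sum_i\|(\mathcal{Y}^R)_i\|_*$, $g_3=\Xi(T)$, and $h$ collects the two quadratic coupling terms of Theorem~\ref{thm:real_equivalence}. By the sum rule for a separable nonsmooth part plus a $C^1$ part,
\[
\partial f^{\mathbb{R}} = \bigl(\partial g_1+\nabla_{\mathcal{X}}h,\ \partial g_2+\nabla_{\mathcal{Y}}h,\ \nabla_{\mathcal{Z}}h,\ \partial g_3+\nabla_T h\bigr).
\]
This requires $\phi\in C^1$ with $\phi'$ locally Lipschitz, which we assume in addition to the hypothesis so that $\nabla_{\mathcal Z}h$ is locally Lipschitz.

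The optimality conditions give the following. For the $\mathcal{X}$-step there is $u_1\in\partial g_1(\mathcal{X}^{R,k+1})$ with
\[
u_1+\nabla_{\mathcal X}h(\mathcal{X}^{R,k+1},\mathcal{Y}^{R,k},\mathcal{Z}^{R,k},T^k)+c\rho_1(\mathcal{X}^{R,k+1}-\mathcal{X}^{R,k})=0,
\]
where $c$ is the scaling constant ($1/4$ in the embedded form). The $\mathcal{Y}$, $\mathcal{Z}$ and $T$ steps give analogous relations, each with arguments updated Gauss--Seidel style. We then set
\[
d_{\mathcal X}=\nabla_{\mathcal X}h(\Theta^{R,k+1})-\nabla_{\mathcal X}h(\mathcal{X}^{R,k+1},\mathcal{Y}^{R,k},\mathcal{Z}^{R,k},T^k)-c\rho_1(\mathcal{X}^{R,k+1}-\mathcal{X}^{R,k}),
\]
and similarly $d_{\mathcal Y}$ and $d_{\mathcal Z}$. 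For $T$, $d_T=-\rho_4(T^{k+1}-T^k)$ exactly, since $T$ is updated last. Each component lies in the corresponding block of $\partial f^{\mathbb{R}}(\Theta^{R,k+1})$.

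It remains to bound each difference of partial gradients by a local Lipschitz constant times $\|\Theta^{R,k+1}-\Theta^{R,k}\|$. Because $\nabla_{\mathcal Y}h=\frac{\beta}{4}(\mathcal{Y}^R-\phi(\mathcal{Z}^R))$, the $\mathcal{Y}$-block only needs Lipschitzness of $\phi$. The $\mathcal{X}$-block involves $\mathcal{Z}^R\times_3T^T$, which is bilinear and hence Lipschitz on bounded sets. The $\mathcal{Z}$-block involves $\mathcal{X}^R\times_3T$ and $(\phi(\mathcal{Z}^R)-\mathcal{Y}^R)\odot\phi'(\mathcal{Z}^R)$, and handling the latter is where local Lipschitzness of $\phi'$ enters.

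\textbf{Main obstacle: boundedness.} Local Lipschitz constants are only uniform if the iterates stay in a bounded set, and this is the step I expect to be hardest. I would try the following first. $T^k$ lies on the compact Stiefel manifold. Lemma~\ref{lem:sufficient_decrease} gives $f^{\mathbb Q}(\Theta^k)\le f^{\mathbb Q}(\Theta^0)$. Coercivity then finishes the argument. The nuclear norm bounds $\mathcal{Y}$. The term $\|\mathcal{Y}-\Phi(\mathcal{Z})\|$ bounds $\Phi(\mathcal{Z})$, but this does not bound $\mathcal{Z}$ for saturating activations such as $\tanh$. So I would use the term $\|\mathcal{X}-\mathcal{Z}\times_3T^T\|$ together with Lemma~\ref{lem:parseval}: it bounds $\mathcal{Z}$ given $\mathcal{X}$. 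However, $\mathcal{X}$ on $\Omega^c$ is only tied to $\mathcal{Z}$, so the pair $(\mathcal{X},\mathcal{Z})$ could drift jointly. Failing a direct argument, I would state boundedness of $\{\Theta^k\}$ as a standing assumption, as is standard in \cite{bolte2014proximal}. Summing the four blocks and taking $\tau$ as the maximum of the resulting constants, all depending on the $\rho_i$, $\alpha$, $\beta$ and the Lipschitz constants, completes the proof.
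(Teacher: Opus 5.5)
Your approach is the paper's: the first-order optimality condition of each PAM block in the embedded domain, plus Lipschitz continuity of the smooth part's gradient on a bounded set. Your execution is more careful than the paper's. The paper writes all four inclusions directly at $\Theta^{R,k+1}$, which is exact only for the $T$-block. Your gradient differences at the Gauss--Seidel points are what the Lipschitz step actually needs. Your existence reading of $d^{R,k+1}$ is also forced: $\partial\Xi^{R}$ contains every tensor supported on $\Omega$, so $\partial f^{\mathbb{R}}(\Theta^{R,k+1})$ is unbounded and the literal ``for any $d$'' cannot hold.

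On boundedness, the paper simply asserts it ``by Lemma~\ref{lem:sufficient_decrease}'', and your doubt is justified. Sufficient decrease only keeps the iterates in a sublevel set, and for bounded $\phi$ such as $\tanh$ the function $f^{\mathbb{R}}$ is not coercive. To see this, take $(i,j,k_0)\in\Omega^c$, let $\mathcal{E}$ be supported on that entry, and choose $T$ with the unit row vector $e_{k_0}^T$ among its rows. Then $(\mathcal{X},\mathcal{Z})\mapsto(\mathcal{X}+\lambda\mathcal{E},\,\mathcal{Z}+\lambda\,\mathcal{E}\times_3T)$ preserves feasibility and leaves $\mathcal{X}-\mathcal{Z}\times_3T^T$ unchanged. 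Meanwhile $\|\mathcal{Y}^R-\phi(\mathcal{Z}^R)\|_F$ stays bounded as $\lambda\to\infty$. So boundedness cannot come from that lemma. Adding it as a hypothesis is the right repair, not a weakness relative to the paper. It is used through $\sup_k\|\mathcal{Z}^{R,k}\|_F$ in the $\mathcal{X}$-block and $\sup_k\|\mathcal{X}^{R,k}\|_F$ in the $\mathcal{Z}$-block.

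One simplification: local Lipschitzness of $\phi'$ is unnecessary. Under the order in \eqref{eq:pam_updates}, the $\mathcal{Z}$-step condition holds at $(\mathcal{X}^{R,k+1},\mathcal{Y}^{R,k+1},\mathcal{Z}^{R,k+1},T^k)$, which differs from $\Theta^{R,k+1}$ only in $T$. The term $\tfrac{\beta}{4}(\phi(\mathcal{Z}^R)-\mathcal{Y}^R)\odot\phi'(\mathcal{Z}^R)$ does not involve $T$, so it cancels exactly in $d_{\mathcal{Z}}$. Since $T^k$ and $T^{k+1}$ are both semi-orthogonal, all that remains besides the proximal term is $-\tfrac{\alpha}{4}\,\mathcal{X}^{R,k+1}\times_3(T^{k+1}-T^k)$. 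Beyond the stated Lipschitz hypothesis, you only need $\phi\in C^1$, for the sum rule.
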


\begin{proof}
By Lemma~\ref{lem:sufficient_decrease}, the sequence $\{\Theta^{R,k}\}$ satisfies sufficient decrease and is bounded. Hence the smooth part of $f^{\mathbb{R}}$ has Lipschitz continuous gradient on this bounded set.

For each block update, the first-order optimality condition gives
\begin{align*}
0 &\in \partial_{\mathcal{X}^R} f^{\mathbb{R}}(\Theta^{R,k+1})
+ \frac{\rho_1}{4}(\mathcal{X}^{R,k+1} - \mathcal{X}^{R,k}), \\
0 &\in \partial_{\mathcal{Y}^R} f^{\mathbb{R}}(\Theta^{R,k+1})
+ \frac{\rho_2}{4}(\mathcal{Y}^{R,k+1} - \mathcal{Y}^{R,k}), \\
0 &\in \partial_{\mathcal{Z}^R} f^{\mathbb{R}}(\Theta^{R,k+1})
+ \frac{\rho_3}{4}(\mathcal{Z}^{R,k+1} - \mathcal{Z}^{R,k}), \\
0 &\in \partial_T f^{\mathbb{R}}(\Theta^{R,k+1})
+ \rho_4(T^{k+1} - T^k).
\end{align*}

Using Lipschitz continuity of the gradient of the smooth part, we obtain
\[
\|d^{R,k+1}\|_{F}^{\mathbb{R}}
\leq
\tau
\|\Theta^{R,k+1} - \Theta^{R,k}\|_{F}^{\mathbb{R}}.
\]
\end{proof}

The third key ingredient is the K-Ł property, which we establish via the semi-algebraic structure of the objective function.

\begin{definition}[Kurdyka--Łojasiewicz (KŁ) Property]
Let $g : \mathbb{R}^m \to (-\infty,+\infty]$ be a proper lower
semi-continuous function.
We say that $g$ satisfies the Kurdyka--Łojasiewicz (KŁ) property
at $\bar{x} \in \mathrm{dom}(\partial g)$ if there exist
$\eta>0$, a neighborhood $\mathcal{U}$ of $\bar{x}$,
and a concave function
$\varphi : [0,\eta) \to \mathbb{R}_+$ such that:

\begin{itemize}
\item $\varphi(0)=0$,
\item $\varphi$ is $C^1$ on $(0,\eta)$ with $\varphi'>0$,
\item for all $x \in \mathcal{U}$ satisfying
\(
g(\bar{x}) < g(x) < g(\bar{x})+\eta,
\)
the inequality holds:
\[
\varphi'(g(x)-g(\bar{x}))
\,
\mathrm{dist}(0,\partial g(x))
\ge 1.
\]
\end{itemize}

If $g$ satisfies the KŁ property at every point of
$\mathrm{dom}(\partial g)$, then $g$ is called a KŁ function.
\end{definition}

\begin{theorem}[KŁ Property of $f^{\mathbb{R}}$]
\label{thm:KL_property}
The real objective function
$f^{\mathbb{R}}(\Theta^{R})$
satisfies the Kurdyka--Łojasiewicz property at every point in
$\mathrm{dom}(\partial f^{\mathbb{R}})$.
\end{theorem}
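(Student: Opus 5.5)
The plan is the standard route for PAM-type objectives: show that $f^{\mathbb{R}}$ is a finite sum of semi-algebraic (or, more generally, subanalytic / definable in an o-minimal structure) functions. Then invoke the theorem of Bolte, Daniilidis and Lewis, as used in~\cite{attouch2013convergence,bolte2014proximal}: a proper lower semicontinuous function definable in an o-minimal structure satisfies the KŁ property at every point of $\mathrm{dom}(\partial f^{\mathbb{R}})$, with a desingularizing function of the form $\varphi(s)=c\,s^{1-\theta}$, $\theta\in[0,1)$. Properness and lower semicontinuity come first. The nuclear norm and the quadratic terms are continuous. The indicators $\Xi^{R}$ and $\Xi(T)$ are indicators of closed sets: an affine subspace $\{\mathcal{P}_\Omega(\mathcal{X}^R)=\mathcal{P}_\Omega(\mathcal{M}^R)\}$ and the Stiefel manifold $\{TT^T=I_r\}$. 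Both sets are nonempty, so $f^{\mathbb{R}}$ is proper and lsc.

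I then treat each term separately. (i) Each $\|(\mathcal{Y}^R)_i\|_*^{\mathbb{R}}$ is semi-algebraic, since the nuclear norm equals the support function of the spectral-norm unit ball, which is semi-algebraic, and the graph of $\|\cdot\|_*$ is defined by a first-order formula over the reals. (ii) The map $(\mathcal{X}^R,\mathcal{Z}^R,T)\mapsto\|\mathcal{X}^R-\mathcal{Z}^R\times_3T^T\|_F^2$ is a polynomial, hence semi-algebraic. (iii) The sets defining $\Xi^{R}$ and $\Xi(T)$ are given by polynomial equalities, so their indicators are semi-algebraic. (iv) The term $\|\mathcal{Y}^R-\phi(\mathcal{Z}^R)\|_F^2$ is a polynomial composed with the entrywise map $\phi$. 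Finite sums and compositions of functions definable in the same o-minimal structure remain definable, so $f^{\mathbb{R}}$ is definable once each piece is.

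The main obstacle is step (iv), because $\tanh$ and the sigmoid are not semi-algebraic. To handle it, I work in the o-minimal structure $\mathbb{R}_{\exp}$ (Wilkie's theorem), which contains all semi-algebraic sets together with the exponential. Both $\tanh(z)=(e^{2z}-1)/(e^{2z}+1)$ and $1/(1+e^{-z})$ are definable there. Semi-algebraic objects are also definable in $\mathbb{R}_{\exp}$, so all terms live in one common o-minimal structure. The KŁ theorem for definable lsc functions in~\cite{bolte2014proximal} then applies to $f^{\mathbb{R}}$ directly. I will state explicitly that this covers any activation $\phi$ definable in some o-minimal structure, for example polynomial, piecewise-polynomial or exp-based activations. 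Because the real embedding is a linear bijection (Theorem~\ref{thm:real_equivalence}), definability of $f^{\mathbb{R}}$ on the structured subspace of embeddings follows as well. Restricting to that linear subspace is harmless, since adding the indicator of a linear subspace preserves definability. This lets the KŁ property transfer to the iterates $\Theta^{R,k}$.
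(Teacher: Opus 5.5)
Your proposal is correct and follows essentially the same route as the paper. Both arguments check term by term that $f^{\mathbb{R}}$ is semi-algebraic or definable (nuclear norm, polynomial coupling term, indicators of an affine set and of $\{TT^T=I_r\}$, and $\phi$ via an o-minimal structure containing $\exp$) and then invoke the K\L{} theorem for proper lsc definable functions. Your explicit properness/lsc check and your use of Wilkie's $\mathbb{R}_{\exp}$ instead of $\mathbb{R}_{\mathrm{an},\exp}$ are minor refinements; the latter also avoids the paper's overbroad claim that every real analytic $\phi$ is definable. One caveat concerns a side claim that the statement does not need: in a structure that is not polynomially bounded, such as $\mathbb{R}_{\exp}$, the general theorem only gives some definable desingularizing $\varphi$, not necessarily $c\,s^{1-\theta}$. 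For $\tanh$ or the sigmoid the power form does hold here, but via local subanalyticity of $f^{\mathbb{R}}$ on bounded boxes, not via definability in $\mathbb{R}_{\exp}$.
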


\begin{proof}
We verify that $f^{\mathbb{R}}$ is a definable function in an
o-minimal structure, which guarantees the KŁ property
\cite{attouch2013convergence}.
\begin{enumerate}
    \item 
\textbf{Nuclear norm term:}
For each real matrix $(\mathcal{Y}^{R})_i$,
the nuclear norm
\[
\|(\mathcal{Y}^{R})_i\|_*^{\mathbb{R}}
=
\sum_j \sigma_j((\mathcal{Y}^{R})_i)
\]
is a continuous semi-algebraic function.
Indeed, singular values are eigenvalues of
$((\mathcal{Y}^{R})_i)^T(\mathcal{Y}^{R})_i$,
whose characteristic polynomial is polynomial in the matrix entries.
Hence the nuclear norm is semi-algebraic.
\item \textbf{Frobenius norm terms:}
The quadratic terms
\[
(\|\mathcal{X}^{R}-\mathcal{Z}^{R}\times_3 T^T\|_F^{\mathbb{R}})^2
\quad \text{and} \quad
(\|\mathcal{Y}^{R}-\phi(\mathcal{Z}^{R})\|_F^{\mathbb{R}})^2
\]
are compositions of polynomial functions and smooth element-wise functions.
The mode-3 product is bilinear in $(\mathcal{Z}^{R},T)$,
hence polynomial.
Therefore the first quadratic term is polynomial.

\item \textbf{Nonlinearity:}
If $\phi$ is chosen as $\tanh$, sigmoid, or any real analytic function,
then $\phi$ is definable in the o-minimal structure
$\mathbb{R}_{\mathrm{an},\exp}$.
Hence the composition $\phi(\mathcal{Z}^{R})$ applied element-wise
is definable.

\item \textbf{Indicator functions:}
The constraint $\Xi^{R}(\mathcal{X}^{R})
=
\mathbf{1}_{\{\mathcal{P}_\Omega(\mathcal{X}^{R})
=
\mathcal{P}_\Omega(\mathcal{M}^{R})\}}$
is the indicator of an affine subspace,
defined by linear equations.
Similarly, $\Xi(T)
=
\mathbf{1}_{\{T:TT^T=I_r\}}$ is defined by polynomial equations.
Both sets are semi-algebraic.
\end{enumerate}
Since $f^{\mathbb{R}}$ is a finite sum and composition of
semi-algebraic and definable functions,
it is definable in an o-minimal structure.
By \cite{attouch2013convergence},
every proper lower semi-continuous definable function
satisfies the KŁ property.

\end{proof}


We now combine the three key ingredients to establish the main convergence result for our QNTTNN algorithm.

\begin{theorem}[Global Convergence of Algorithm~\ref{alg:qnttnn_real}]
\label{thm:global_convergence}
Assume $\phi$ is Lipschitz continuous on bounded sets. 
Then the sequence $\{\Theta^k\}$ generated by Algorithm~\ref{alg:qnttnn_real} satisfies:

\begin{enumerate}
    \item
     $\displaystyle{\sum_{k=0}^{\infty}
    \|\Theta^{k+1} - \Theta^k\|_{F}^{\mathbb{Q}}
    < +\infty.}$
    
    \item
    There exists $\Theta^*$ such that
    $\displaystyle{
    \lim_{k \to \infty} \Theta^k = \Theta^*,
    \qquad
    0 \in \partial f^{\mathbb{Q}}(\Theta^*)}$.
    
    \item
    The rate depends on the K-Ł exponent $\theta \in [0,1)$:
    \[
    \|\Theta^k - \Theta^*\|_{F}^{\mathbb{Q}}
    =
    \begin{cases}
    0, & \text{if } \theta = 0 \text{ (finite termination)}, \\
    O(\gamma^k), & \text{if } \theta \in (0, 1/2] \text{ (linear)}, \\
    O\!\left(k^{-\frac{1}{1-2\theta}}\right),
    & \text{if } \theta \in (1/2, 1) \text{ (sublinear)}.
    \end{cases}
    \]
\end{enumerate}
\end{theorem}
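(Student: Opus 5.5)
The plan is to pass to the real domain via Theorem~\ref{thm:real_equivalence} and run the Attouch--Bolte--Svaiter / Bolte--Sabach--Teboulle argument on $f^{\mathbb{R}}$ and the real iterates $\Theta^{R,k}$. Convergence in the real domain then transfers back to $\Theta^k$, because $\|\mathcal{A}^R\|_F^{\mathbb{R}} = 2\|\mathcal{A}\|_F^{\mathbb{Q}}$ on the $\mathcal{X},\mathcal{Y},\mathcal{Z}$ blocks, $T$ is unchanged, and the embedding is a linear bijection onto its image. Criticality also transfers, since subdifferentials correspond under the isometry up to scaling. The ingredients are Lemma~\ref{lem:sufficient_decrease} (sufficient decrease), Lemma~\ref{lem:relative_error} (relative error), and Theorem~\ref{thm:KL_property} (KŁ property).

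\textbf{Boundedness and limit points.} I first prove that $\{\Theta^{R,k}\}$ is bounded.
\begin{itemize}
\item $T^k$ lies on the compact Stiefel manifold.
\item Sufficient decrease gives $f^{\mathbb{R}}(\Theta^{R,k})\le f^{\mathbb{R}}(\Theta^{R,0})<\infty$. The nuclear norm term is coercive in $\mathcal{Y}^R$, so $\mathcal{Y}^{R,k}$ is bounded.
\item If $\phi$ is bounded (as for $\tanh$ or sigmoid), the term $\frac{\beta}{8}\|\mathcal{Y}^R-\phi(\mathcal{Z}^R)\|_F^2$ does not control $\mathcal{Z}^R$. I would instead use the $\alpha$-term together with Lemma~\ref{lem:parseval}, which gives $\|\mathcal{X}^R\times_3T - \mathcal{Z}^R\|_F$ bounded. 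The observed entries of $\mathcal{X}^R$ are fixed, but the unobserved ones are not directly controlled.
\item I expect to argue instead from the explicit updates. The $\mathcal{X}$-update is a convex combination of $\mathcal{W}^k$ and $\mathcal{X}^k$ off $\Omega$. The $\mathcal{Z}$-update keeps $z$ near a convex combination of $g$ and a bounded perturbation. An induction bound on $\max(\|\mathcal{X}^{R,k}\|,\|\mathcal{Z}^{R,k}\|)$ therefore looks plausible.
\end{itemize}
This is the step I regard as the main obstacle, since the lemma statements tacitly assume it. If the induction fails, I would add boundedness of the iterates as a standing assumption, as is standard.

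Given boundedness, summing the decrease inequality shows that $f^{\mathbb{R}}(\Theta^{R,k})$ is monotone and bounded below by $0$. Hence $\sum_k\|\Theta^{R,k+1}-\Theta^{R,k}\|^2<\infty$. The set of limit points $\omega(\Theta^{R,0})$ is then nonempty, compact and connected.

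\textbf{Constant value on limit points.} I show $f^{\mathbb{R}}$ is constant on $\omega(\Theta^{R,0})$.
\begin{itemize}
\item Continuity on the feasible set handles every term except the indicators.
\item The indicators are closed constraints preserved along subsequences: the affine set for $\mathcal{X}$ and the Stiefel set for $T$.
\item Limit points are critical: passing to the limit in Lemma~\ref{lem:relative_error} gives $d^{R,k}\to0$, and closedness of the limiting subdifferential yields $0\in\partial f^{\mathbb{R}}(\Theta^{R,*})$.
\end{itemize}

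\textbf{Finite length and rates.} I use the uniformized KŁ property on the compact set $\omega(\Theta^{R,0})$ (Bolte--Sabach--Teboulle, Lemma 6), with desingularizer $\varphi(s)=cs^{1-\theta}$. The standard chain is as follows:
\begin{itemize}
\item concavity of $\varphi$ and sufficient decrease give $\varphi(\Delta_k)-\varphi(\Delta_{k+1})\ge\varphi'(\Delta_k)\,\rho\|\Theta^{R,k+1}-\Theta^{R,k}\|^2$;
\item relative error gives $\varphi'(\Delta_k)\ge 1/(\tau\|\Theta^{R,k}-\Theta^{R,k-1}\|)$;
\item combining these with $2\sqrt{ab}\le a+b$ yields $2\|\Theta^{R,k+1}-\Theta^{R,k}\|\le\|\Theta^{R,k}-\Theta^{R,k-1}\|+\frac{\tau}{\rho}(\varphi(\Delta_k)-\varphi(\Delta_{k+1}))$.
\end{itemize}
Summing this telescopes and gives item 1. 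Item 1 makes the sequence Cauchy, which gives item 2, transported to the quaternion domain.

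For item 3, I follow Attouch--Bolte (2009). Set $S_k=\sum_{j\ge k}\|\Theta^{j+1}-\Theta^j\|$, so $S_k\ge\|\Theta^k-\Theta^*\|$. Combining the previous inequality with the KŁ inequality yields $S_k\le C(S_{k-1}-S_k)+C'(S_{k-1}-S_k)^{(1-\theta)/\theta}$. The three exponent regimes for $\theta$ then give finite termination, linear convergence, and the stated sublinear rate respectively.
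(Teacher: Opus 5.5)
Your route is the same as the paper's. You pass to $f^{\mathbb R}$ via Theorem~\ref{thm:real_equivalence} and feed Lemma~\ref{lem:sufficient_decrease}, Lemma~\ref{lem:relative_error} and Theorem~\ref{thm:KL_property} into the Attouch--Bolte--Svaiter / Bolte--Sabach--Teboulle argument. Your version is more complete than the paper's sketch: it adds constancy of $f^{\mathbb R}$ on $\omega(\Theta^{R,0})$, the uniformized K{\L} lemma, the explicit telescoping inequality and the Attouch--Bolte recursion.

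The genuine gap is the one you flagged, boundedness of $\{\Theta^{R,k}\}$. The Lipschitz constants in Lemma~\ref{lem:relative_error}, the existence of limit points and the uniformized K{\L} step all depend on it, and your induction does not deliver it. Set $\lambda_1=\alpha/(\alpha+\rho_1)$ and $\lambda_3=\alpha/(\alpha+\rho_3)$, and use $\|\mathcal Z\times_3T^T\|_F=\|\mathcal Z\|_F$ and $\|\mathcal X\times_3T\|_F\le\|\mathcal X\|_F$. The explicit updates then give
\begin{gather*}
\|\mathcal X^{k+1}\|_F\le\|\mathcal M\|_F+\lambda_1\|\mathcal Z^{k}\|_F+(1-\lambda_1)\|\mathcal X^{k}\|_F,\\
\|\mathcal Z^{k+1}\|_F\le\lambda_3\|\mathcal X^{k+1}\|_F+(1-\lambda_3)\|\mathcal Z^{k}\|_F+C_\phi,
\end{gather*}
where $C_\phi$ bounds the $\phi$-perturbation $\mathcal Z^{k+1}-\mathcal G^{k}$; it is finite because $\phi$, $\phi'$ and $\mathcal Y^k$ are bounded. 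The weights sum to one, so for $m_k=\max(\|\mathcal X^k\|_F,\|\mathcal Z^k\|_F)$ you only get $m_{k+1}\le m_k+C$, i.e.\ $m_k=O(k)$. There is no contraction: the averaging fixes every consistent pair $\mathcal X=\mathcal Z\times_3T^T$. Only the data on $\Omega$ and the saturating $\phi$-term can push such pairs back.

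Sufficient decrease cannot supply the bound either, because for bounded $\phi$ the function $f^{\mathbb R}$ is not coercive. Suppose a tube has fewer than $r$ observed entries, as many do at the lower sampling rates in the experiments. Choose $\mathcal D\neq0$ supported on that tube with $\mathcal P_\Omega(\mathcal D\times_3T^T)=0$. Put $\mathcal Z_t=\mathcal Z_0+t\mathcal D$, $\mathcal X_t=\mathcal P_\Omega(\mathcal M)+\mathcal P_{\Omega^c}(\mathcal Z_t\times_3T^T)$ and $\mathcal Y=0$. The $\alpha$-term is then independent of $t$, and the $\beta$-term is bounded since $|\phi|\le1$, so $f$ stays bounded while $\|\Theta_t\|\to\infty$. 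Square-summable steps only give $\|\Theta^k\|=O(\sqrt k)$.

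The paper has exactly the same hole. Its proof of Lemma~\ref{lem:relative_error} asserts the iterates are bounded ``by Lemma~\ref{lem:sufficient_decrease}'', which is this non sequitur, and Steps~2--3 of the theorem's proof inherit it. So both arguments prove the theorem only under the extra (standard BST) hypothesis that $\{\Theta^k\}$ is bounded. Your fallback states this honestly, but it is weaker than what is claimed. An unconditional version needs a coercive model, for example:
\begin{itemize}
\item a coercive $\phi$, or
\item a box constraint on $\mathcal X$, which is natural for data in $[0,1]$, turns the $\mathcal X$-update into clipping, and leaves the K{\L} argument intact.
\end{itemize}
With the box constraint, $\|\mathcal Z^k\|_F=\|\mathcal Z^k\times_3(T^k)^T\|_F\le\|\mathcal X^k\|_F+\sqrt{2f^{\mathbb Q}(\Theta^0)/\alpha}$ bounds $\mathcal Z$.

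A minor point on item~3: your recursion $S_k\le C(S_{k-1}-S_k)+C'(S_{k-1}-S_k)^{(1-\theta)/\theta}$ gives $O\bigl(k^{-(1-\theta)/(2\theta-1)}\bigr)$ for $\theta\in(1/2,1)$. The printed exponent $-1/(1-2\theta)$ is positive there, so the stated bound is vacuous and should be read as this Attouch--Bolte rate. Also, definability in $\mathbb R_{\mathrm{an},\exp}$ only yields some definable desingularizer, not necessarily one of the form $cs^{1-\theta}$, so the existence of $\theta$ is an implicit hypothesis.
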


\begin{proof}
We apply the abstract proximal alternating minimization
convergence framework of \cite{attouch2013convergence}.

\noindent
\textbf{Step 1:} From Lemma~\ref{lem:sufficient_decrease}, the sequence satisfies
\[
f^{\mathbb{R}}(\Theta^{R,k+1})
+
\rho
\|\Theta^{R,k+1} - \Theta^{R,k}\|_{F}^{\mathbb{R}\,2}
\le
f^{\mathbb{R}}(\Theta^{R,k}),
\]
which gives sufficient decrease. From Lemma~\ref{lem:relative_error}, the relative error condition holds:
\[
\mathrm{dist}\!\left(
0, \partial f^{\mathbb{R}}(\Theta^{R,k})
\right)
\le
\tau
\|\Theta^{R,k} - \Theta^{R,k-1}\|_{F}^{\mathbb{R}}.
\]

From Theorem~\ref{thm:KL_property},
$f^{\mathbb{R}}$ satisfies the K-Ł property.

\noindent \textbf{Step 2: Finite length.} By sufficient decrease,
\[
\rho
\sum_{k=0}^{\infty}
\|\Theta^{R,k+1} - \Theta^{R,k}\|_{F}^{\mathbb{R}\,2}
\le
f^{\mathbb{R}}(\Theta^{R,0})
-
\inf f^{\mathbb{R}}
<
+\infty.
\]
Hence $\sum_{k=0}^{\infty}
\|\Theta^{R,k+1} - \Theta^{R,k}\|_{F}^{\mathbb{R}}
< +\infty$ by the K-Ł inequality (finite length property). Using $(\|\mathcal{A}^R\|_{F}^{\mathbb{R}})^2
=
4(\|\mathcal{A}\|_{F}^{\mathbb{Q}})^2,$ the result transfers to the quaternion domain.

\noindent \textbf{Step 3: Convergence to a critical point:} Since $\{\Theta^{R,k}\}$ has finite length, it is Cauchy and thus converges to some limit $\Theta^{R,*}$.
By the relative error condition,
\[
\mathrm{dist}\!\left(
0, \partial f^{\mathbb{R}}(\Theta^{R,k})
\right)
\to 0.
\]
Closedness of the limiting subdifferential yields $0 \in \partial f^{\mathbb{R}}(\Theta^{R,*}).$
By the bijectivity of the real embedding,
$0 \in \partial f^{\mathbb{Q}}(\Theta^{*}).$

\noindent\textbf{Step 4: Convergence rate:} Applying the standard K-Ł inequality
\[
\varphi'\!\left(
f^{\mathbb{R}}(\Theta^{R,k}) - f^{\mathbb{R}}(\Theta^{R,*})
\right)
\mathrm{dist}\!\left(
0, \partial f^{\mathbb{R}}(\Theta^{R,k})
\right)
\ge 1,
\]
and combining with sufficient decrease and the relative error bound,
the convergence rates follow from the general theory in
\cite{attouch2013convergence,bolte2014proximal}.
\end{proof}

\begin{remark}
If $\phi$ is real analytic (e.g., $\tanh$),
then $f^{\mathbb{R}}$ is definable in an o-minimal structure,
hence satisfies the K-Ł property with some exponent
$\theta \in [0,1)$.
The precise value of $\theta$ depends on the geometry of the problem
and cannot, in general, be determined explicitly.
\end{remark}

\section{Numerical Experiments}

In this section, we conduct numerical experiments on color video data to evaluate the performance of the proposed Quaternion Nonlinear Transform-based Tensor Nuclear Norm (QNTTNN) method for low-rank tensor completion (LRTC). Unlike conventional approaches that process RGB color channels independently, our quaternion-based framework represents each color pixel as a pure quaternion $q = 0 + R \mathbf{i} + G \mathbf{j} + B \mathbf{k}$, naturally encoding inter-channel correlations through quaternion algebra. This structure-preserving representation enables our method to leverage color relationships that are fundamentally lost in component-wise processing. All experimental video tensors are prescaled to $[0,1]$, and experiments are implemented in MATLAB R2023b running on Windows 11, using an Intel(R) Core(TM) $i9$-$12900$K ($3.2$~GHz) processor with 32~GB RAM.

We compare QNTTNN against three baseline methods: the t-SVD-based TNN \cite{zhang2017exact}, the DFT-based NTTNN~\cite{benzheng2022nonlineartrans} method applied channel-wise, and our proposed quaternion extension. Following established practices, we employ linear interpolation initialization for $\mc{X}^0$ and derive the transform initialization ${T}^0$ from the first $r$ left-singular vectors of the mode-3 unfolding of $\mc{X}^0$. The quaternion tensor is then initialized as $\mathcal{Z}^0 = \text{fold}_3({T}^0 \mc{X}^0_{(3)})$, with $\mathcal{Y}^0 = \phi(\mathcal{Z}^0)$ where $\phi(\cdot)$ denotes the hyperbolic tangent nonlinearity:
\begin{equation*}
\phi(x) = \frac{e^x - e^{-x}}{e^x + e^{-x}}.
\end{equation*}
Proximal parameters are set to $\rho_i = 0.001$ ($i = 1,2,3,4$), penalty parameters $\alpha$ and $\beta$ are selected from $\{1, 10, 100\}$, and the transform dimension $r$ is chosen from $\{3,4,5,6,7,8,9,10\}$ based on grid search optimization.
 We evaluate on \texttt{carphone\_qcif.y4m} ($144 \times 176 \times 100$) with random masks at sampling rates 5\%, 10\%, 20\%, 30\%.

Let $\mathcal{X}_0 \in \mathbb{Q}^{n_1 \times n_2 \times n_3}$ denote the original quaternion tensor with complete entries, and let $\mathcal{T} \in \mathbb{Q}^{n_1 \times n_2 \times n_3}$ be the observed tensor with missing entries. Denote by $\hat{\mathcal{X}} \in \mathbb{Q}^{n_1 \times n_2 \times n_3}$ the recovered tensor obtained by the proposed method. The reconstruction performance is evaluated using the relative squared error (RSE), defined as
\begin{equation*}
\mathrm{RSE} := \frac{\lVert \hat{\mathcal{X}} - \mathcal{X}_0 \rVert_F}{\lVert \mathcal{X}_0 \rVert_F},
\end{equation*}
and the peak signal-to-noise ratio (PSNR), defined as
\begin{equation*}
\mathrm{PSNR} := 10 \log_{10} \left( \frac{n_1 n_2 n_3 \, (m_1 - m_2)^2}{\lVert \hat{\mathcal{X}} - \mathcal{X}_0 \rVert_F^2} \right),
\end{equation*}
where $m_1$ and $m_2$ denote the maximum and minimum values of $\mathcal{X}_0$, respectively. Moreover, the average structural similarity index (SSIM) over all frames is adopted to assess the perceptual quality for color video reconstruction.

\subsection{Results}

\begin{table}[!ht]
\centering
\caption{Observed and Recovered PSNR/SSIM for Color Video under Different Sampling Rates}
\label{tab:video_results}
\begin{tabular}{|l|c|c|c|}
\hline
Method & Sampling = 5\% & Sampling = 10\% & Sampling = 30\% \\
\hline
Observed (Input) 
& 6.88 / 0.0146 
& 7.11 / 0.0236 
& 8.20 / 0.0571 \\

TNN~\cite{zhang2017exact}      
& 25.12 / 0.7190 
& 27.16 / 0.7871 
& 31.68 / 0.8985 \\
NTTNN~\cite{benzheng2022nonlineartrans}    
& 27.16 / 0.8078 
& 29.44 / 0.8707 
& 32.85 / 0.9340 \\
\textbf{QNTTNN} 
& \textbf{27.60 / 0.8228}
& \textbf{29.73 / 0.8723}
& \textbf{33.90 / 0.9372} \\
\hline
\end{tabular}
\end{table}

Table~\ref{tab:video_results} presents the quantitative comparison of QNTTNN against baseline methods across different sampling rates. The results demonstrate that QNTTNN consistently outperforms both TNN and channel-wise NTTNN across all sampling rates. At 5\% sampling, QNTTNN achieves 27.60 dB PSNR, representing a 0.44 dB improvement over NTTNN and 2.48 dB over TNN. The performance gap widens at higher sampling rates, with QNTTNN reaching 33.90 dB at 30\% sampling a 1.05 dB gain over NTTNN. The SSIM values further confirm the superior perceptual quality of quaternion-based recovery, maintaining values above 0.82 even at the challenging 5\% sampling rate. These improvements directly stem from the quaternion framework's ability to preserve inter-channel color correlations that are inherently discarded in component-wise processing approaches.

\begin{figure}[!h]
\centering
\begin{tabular}{@{}c@{\hspace{0.8mm}}c@{\hspace{0.8mm}}c@{\hspace{0.8mm}}c@{\hspace{0.8mm}}c@{}}

\includegraphics[width=0.18\textwidth]{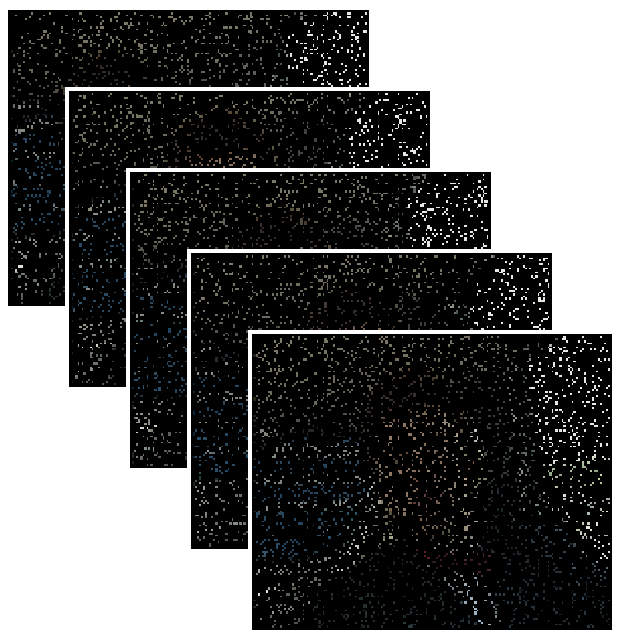} &
\includegraphics[width=0.18\textwidth]{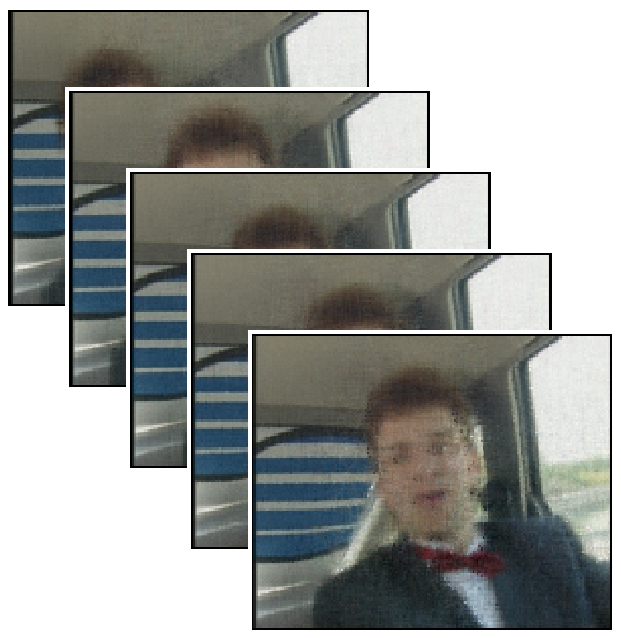} &
\includegraphics[width=0.18\textwidth]{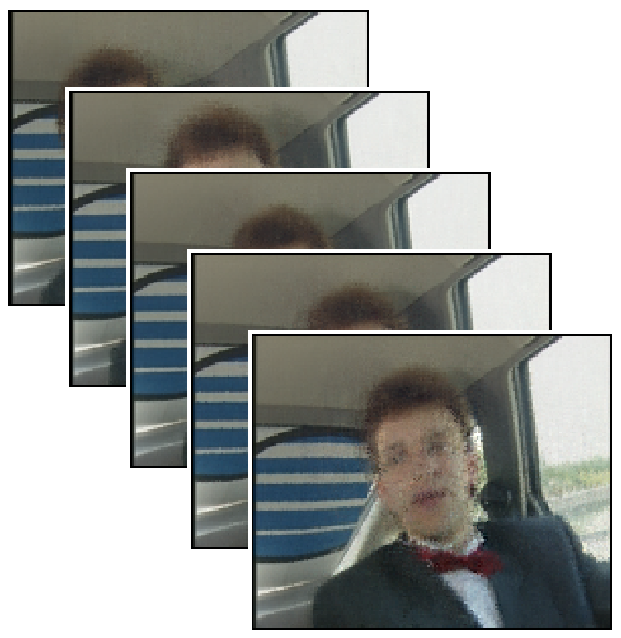} &
\includegraphics[width=0.18\textwidth]{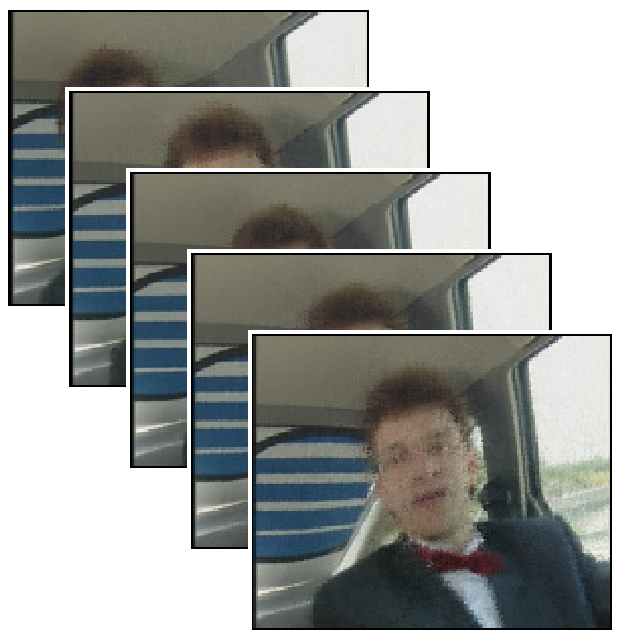} &
\includegraphics[width=0.18\textwidth]{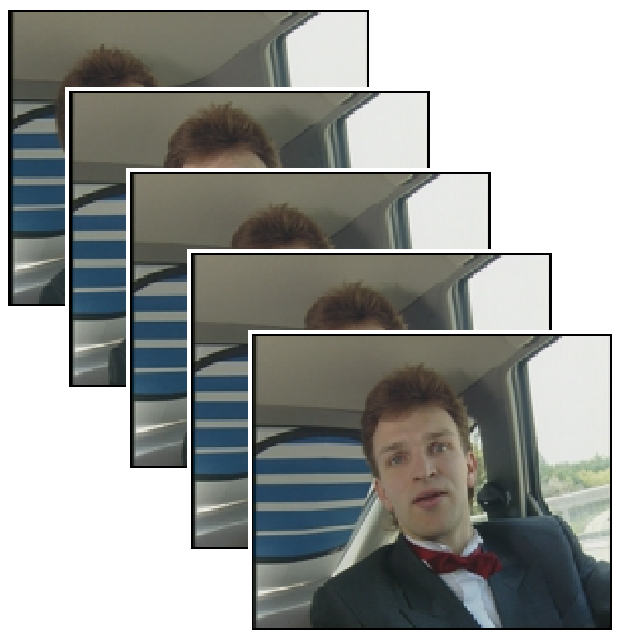} \\

\makebox[0.18\textwidth][c]{\scriptsize Observed} &
\makebox[0.18\textwidth][c]{\scriptsize TNN} &
\makebox[0.18\textwidth][c]{\scriptsize NTTNN} &
\makebox[0.18\textwidth][c]{\scriptsize \textbf{QNTTNN}} &
\makebox[0.18\textwidth][c]{\scriptsize Ground Truth}

\end{tabular}
\caption{Visual comparison of quaternion video recovery at Sampling Rate = 30\%.}
\end{figure}

\subsection{Performance versus Sampling Ratio}

Fig.~\ref{fig:sampling_ratio} illustrates the quantitative performance of different tensor completion methods with respect to the sampling ratio, evaluated in terms of PSNR, SSIM, and relative error. As the sampling ratio increases, all methods exhibit improved reconstruction quality, which is consistent with the availability of more observed entries. Notably, the proposed QNTTNN consistently achieves the highest PSNR and SSIM values and the lowest relative error across all sampling ratios, demonstrating its superior reconstruction capability. Compared with NTTNN and the linear TNN model, QNTTNN shows more pronounced performance gains in the low-sampling regime, indicating that the nonlinear transform in the quaternion domain is particularly effective in exploiting structural correlations under severely missing data. These results further validate the advantage of combining nonlinear transforms with quaternion-valued low-rank modeling for tensor completion.
\begin{figure}[t]
\centering
\includegraphics[width=0.32\linewidth]{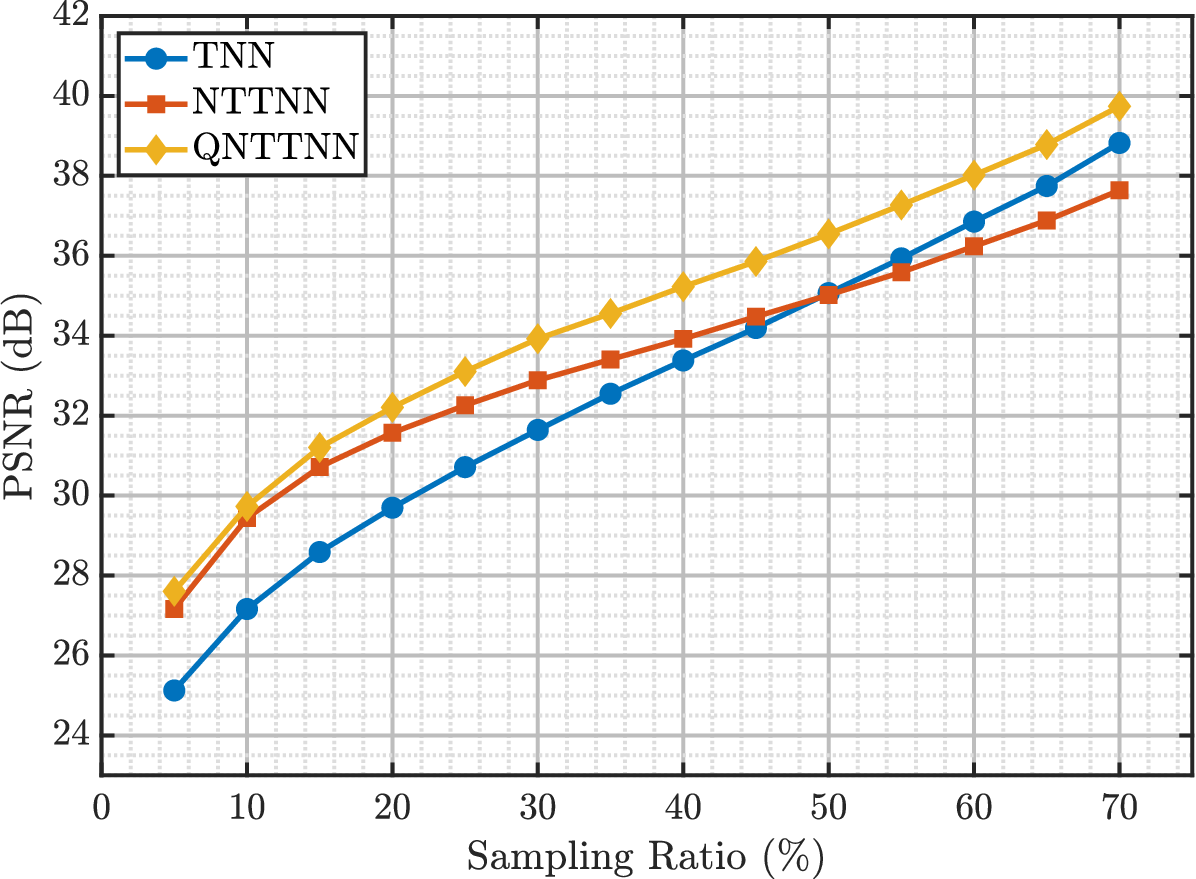}
\includegraphics[width=0.32\linewidth]{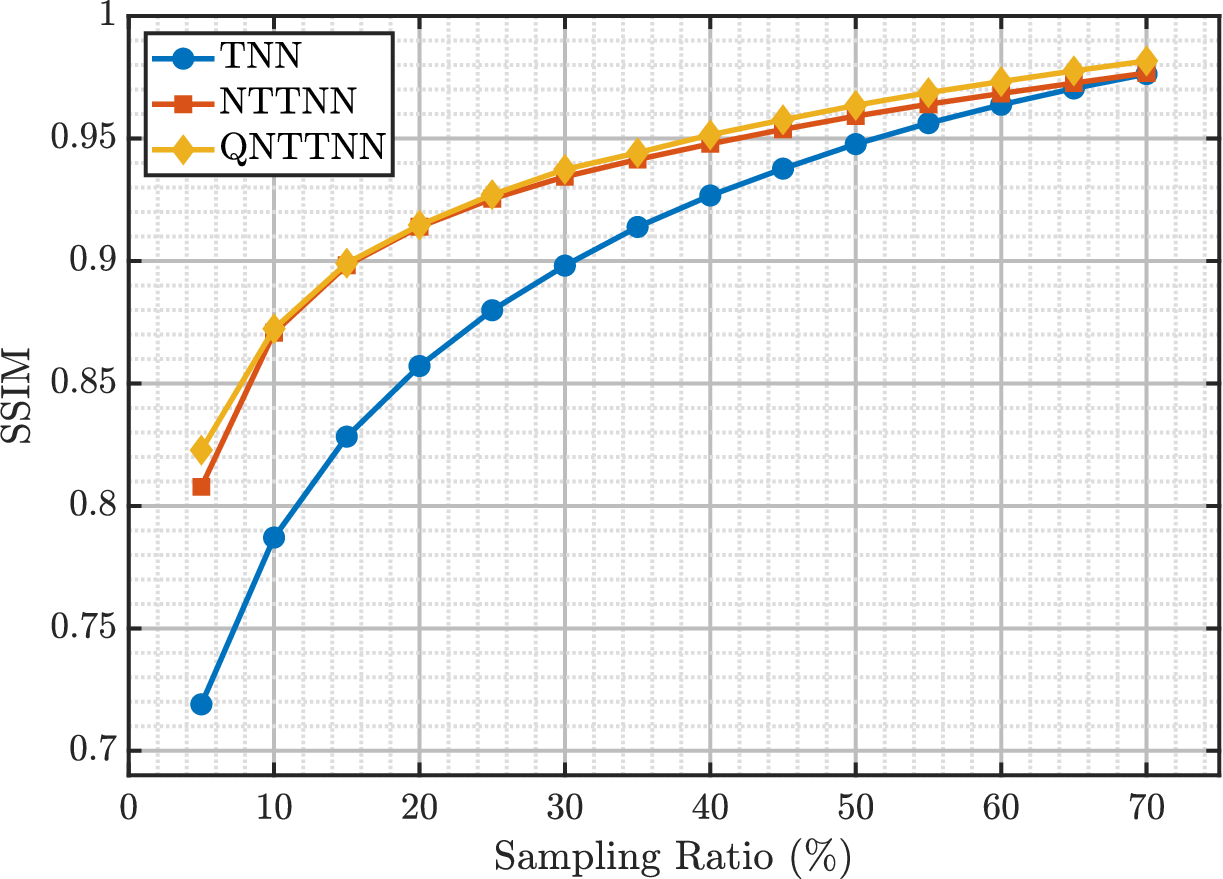}
\includegraphics[width=0.32\linewidth]{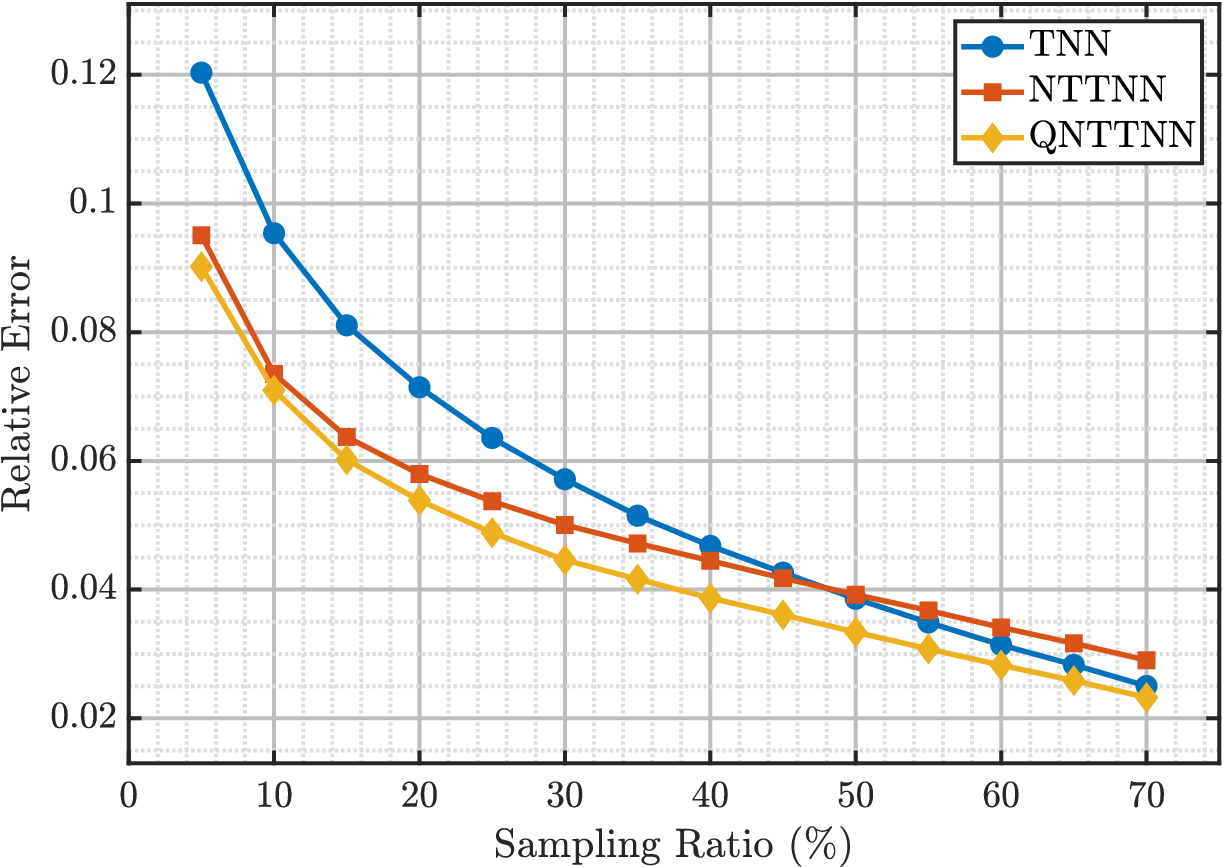}
\caption{Performance comparison with respect to sampling ratio: (left) PSNR, (middle) SSIM, and (right) relative error.}
\label{fig:sampling_ratio}
\end{figure}

\subsection{Nonlinear Transform Functions and Ablation Study}

In the proposed QNTTNN framework, an element-wise nonlinear transform is incorporated into the composite transform to enhance low-rankness in the transformed domain. In this work, we consider five commonly used nonlinear activation functions, whose mathematical definitions are summarized in Table~\ref{tab:nonlinear_defs}.

\begin{table}[t]
\centering
\caption{Definitions of nonlinear transform functions}
\label{tab:nonlinear_defs}
\setlength{\tabcolsep}{6pt}
\begin{tabular}{ll}
\hline
Transform & Definition \\
\hline
tanh & $\tanh(x) = \frac{e^x - e^{-x}}{e^x + e^{-x}}$ \\
sigmoid & $\sigma(x) = \frac{1}{1 + e^{-x}}$ \\
ReLU & $\mathrm{ReLU}(x) = \max(0, x)$ \\
ELU & $\mathrm{ELU}(x) =
\begin{cases}
x, & x>0, \\
\alpha(e^x - 1), & x \le 0,
\end{cases}$ \\
Swish & $\mathrm{Swish}(x) = x \cdot \sigma(x)$ \\

\hline
\end{tabular}
\end{table}

To investigate the impact of different nonlinear transforms in the proposed QNTTNN framework, we conduct an ablation study by replacing the element-wise nonlinear function in the composite transform with each of the functions listed in Table~\ref{tab:nonlinear_defs}. All experiments are performed under identical experimental settings, with a fixed sampling ratio of $10\%$ and the same algorithmic parameters, to ensure a fair comparison.

\begin{table}[t]
\centering
\caption{Ablation study on nonlinear transform functions in QNTTNN}
\label{tab:nonlinear_ablation}
\setlength{\tabcolsep}{6pt}
\begin{tabular}{lccc}
\hline
Transform & PSNR (dB) & SSIM & Time (s) \\
\hline
tanh    & \textbf{29.73} & \textbf{0.8723} & 114.4 \\
sigmoid & 29.22 & 0.8690 & 116.2 \\
ReLU    & 25.06 & 0.7217 & 126.3 \\
ELU     & 29.16 & 0.8644 & 151.2 \\
Swish   & 27.79 & 0.8298 & 116.1 \\

\hline
\end{tabular}
\end{table}

Table~\ref{tab:nonlinear_ablation} reports the quantitative results in terms of PSNR, SSIM, and computational time. Among the evaluated nonlinear functions, \textit{tanh} achieves the best overall performance, yielding the highest PSNR (29.73~dB) and SSIM (0.8720), which demonstrates its strong capability in enhancing low-rankness under the nonlinear transform. \textit{Sigmoid} and \textit{ELU} also exhibit competitive performance, with slightly lower reconstruction accuracy but comparable stability. In contrast, \textit{ReLU} shows significantly inferior performance, which can be attributed to its non-smooth and one-sided nature that weakens low-rank promotion in the transformed domain. \textit{Swish} provides moderate improvements but does not outperform bounded nonlinearities such as \textit{tanh} and \textit{sigmoid}.

Overall, these results indicate that smooth and bounded nonlinear transforms are more effective in the proposed QNTTNN framework, as they better preserve structural correlations and promote low-rankness in quaternion frontal slices.

\section{Conclusion}
In this paper, we proposed a quaternion nonlinear transform and developed the corresponding Quaternion Nonlinear Transform–Induced Tensor Nuclear Norm (QNTTNN) for quaternion-valued tensor completion. The proposed framework employs a composite transform consisting of a linear semi-orthogonal transform along the third mode and an element-wise nonlinear transform applied to quaternion frontal slices via real embedding, enabling effective exploitation of inter-channel correlations. Based on the proposed quaternion low-rank metric, we formulated a quaternion tensor completion model and designed an efficient proximal alternating minimization algorithm, whose convergence to critical points is theoretically guaranteed by Kurdyka–Łojasiewicz analysis in the real embedded domain. Extensive experiments on benchmark color video inpainting datasets demonstrate that Q-NTTNN consistently outperforms channel-wise NTTNN .

As future work, we plan to extend the proposed QNTTNN framework to learning-driven quaternion models, such as integrating adaptive nonlinear transforms and quaternion neural networks, and to explore its applications in large-scale video restoration and multimodal quaternion-valued data analysis.




\section*{Conflict of Interest}
The authors declare that they have no competing interests.





\section*{Funding}

Ratikanta Behera is supported by the Anusandhan National Research Foundation (ANRF), Government of India, under Grant No. EEQ/2022/001065.

\end{document}